\documentclass{article}
\usepackage{amsmath,amssymb,amsfonts}
\usepackage{graphicx}
\usepackage{color}

\usepackage{pb-diagram}
\dgARROWLENGTH=5em

\newtheorem{theorem}{Theorem}

\newtheorem{proposition}{Proposition}
\newtheorem{corollary}{Corollary}
\newtheorem{lemma}{Lemma}

\newtheorem{example}{Example}

\newenvironment{proof}[1][Proof]{\noindent\textit{#1.} }{\hfill$\Box$\medskip}

\title{Systems of the Kowalevski type and discriminantly separable polynomials}
\author{Vladimir Dragovi\'c\footnote{Corresponding author}  (a)  and Katarina Kuki\'c
(b)}

\date{}

\begin{document}

\maketitle

\medskip

\centerline{(a) The Department of Mathematical Sciences, University of Texas at Dallas,}
\centerline{800 West Campbell Road, Richardson TX 75080, USA}
\centerline{Mathematical Institute SANU, Kneza Mihaila 36, 11000 Belgrade, Serbia}
\centerline{e-mail: {\tt Vladimir.Dragovic@utdallas.edu}}

\smallskip

\centerline{(b) Faculty for Traffic and Transport Engineering, University of Belgrade}
\centerline{Vojvode Stepe 305, 11000 Belgrade, Serbia}
\centerline{e-mail: {\tt k.mijailovic@sf.bg.ac.rs}}

\begin{abstract}
Starting from the notion of discriminantly separable polynomials of
degree two in each of three variables, we construct a class of
integrable dynamical systems. These systems can be integrated
explicitly in genus two theta-functions in a procedure which is
similar to the classical one for the Kowalevski top. The
discriminnatly separable polynomials play the role of the Kowalevski
fundamental equation. The natural examples include the Sokolov
systems and the Jurdjevic elasticae.

\medskip

A partir de la notion des polyn\^omes avec les discriminants
s\'eparables qui sont du second degr\'e par rapport à chacune de
trois variables, nous construisons une classe des systemes
dynamiques. On peut r\'esoudre explicitement ces systemes via les
fonctions th\^eta de genre 2, par une proc\'edure similaire à cette
classique pour la toupie de Kowalevski. Les polyn\^omes avec les
discriminants s\'eparables jouent le r\^ole de l'\'equation
fondamentale de Kowalevski. Les exemples naturels incluent les
systemes de Sokolov et les \'elastiques de Jurdjevic.

\end{abstract}

\

\

AMS Subj. Class. 37J35, 37K60 (70E17, 70E40, 39A10)

\

\

Keywords: Integrable systems, Kowalevski top, Discriminantly separable polynomials, Systems of the Kowalevski type

\newpage

\tableofcontents \footnote{Acknowledgements:
 The research was partially
supported by the Serbian Ministry of Science and Technological
Development, Project 174020 "Geometry and Topology of Manifolds,
Classical Mechanics and Integrable Dynamical Systems"}.

\section{Introduction}\label{sec:intro}

\

\subsection{A short note on discriminantly separable polynomials}

\

The Kowalevski top \cite{Kow} is one of the most celebrated
integrable systems. There is a waste literature dedicated to
understanding of Kowalevski original integration procedure, to its
modern versions and hidden symmetries (see for example \cite{Kow1},
\cite{Kot}, \cite{Gol}, \cite{HorMoer}, \cite{Mlo}, \cite{App}, \cite{Del},
\cite{Jur}, \cite{Dub}, \cite{Aud}, \cite{BRST}).

In a recent paper \cite {Drag3} of one of the authors of the present
paper, a new approach to the Kowalevski integration procedure has
been suggested. The novelty has been based on a new notion
introduced therein of {\it discriminantly separable polynomials}. A
family of such polynomials has been constructed there as  pencil
equations from the theory of conics
$$
\mathcal F(w, x_1, x_2)=0,
$$
where $w, x_1, x_2$ are the pencil parameter and the Darboux
coordinates respectively. (For classical applications of the Darboux
coordinates see  Darboux's book \cite {Dar1}, for modern
applications see the book \cite{DragRad} and \cite{Drag2}.) The key
algebraic property of the pencil equation, as quadratic equation in
each of three variables $w, x_1, x_2$ is: {\it all three of its
discriminants are expressed as products of two polynomials in one
variable each}:
\begin{equation}
\aligned \mathcal D_w(\mathcal F)(x_1,x_2)&=P(x_1)P(x_2)\\
\mathcal D_{x_1}(\mathcal F)(w,x_2)&=J(w)P(x_2)\\
\mathcal D_{x_2}(\mathcal F)(w,x_1)&=P(x_1)J(w)
\endaligned
\end{equation}
where $J, P$ are polynomials of degree $3$ and $4$ respectively, and
the elliptic curves
$$\Gamma_1: y^2=P(x), \quad \Gamma_2: y^2=J(s)$$
are isomorphic (see Proposition 1 of \cite{Drag3}) .

In the so-called {\it fundamental Kowalevski equation} (see formula
(\ref{eq:fundKowrel}) below, and also \cite{Kow}, \cite{Kot},
\cite{Gol}) $Q(w, x_1, x_2)=0$, the polynomial $Q(w, x_1, x_2)$
appeares to be an example of a member of the  family, as it was
shown in \cite{Drag3} (Theorem 3). Moreover, all main steps of the
Kowalevski integration now follow as easy and transparent logical
consequences of the theory of  discriminantly separable polynomials.
Let us mention here just one relation, see Corollary 1 from
\cite{Drag3} (known  in the context of the Kowalevski top as {\it
the Kowalevski magic change of variables}):
\begin{equation}\label{eq:kowchange}
\aligned
\frac{dx_1}{\sqrt{P(x_1)}}+\frac{dx_2}{\sqrt{P(x_2)}}&=\frac{dw_1}{\sqrt{J(w_1)}}\\
\frac{dx_1}{\sqrt{P(x_1)}}-\frac{dx_2}{\sqrt{P(x_2)}}&=\frac{dw_2}{\sqrt{J(w_2)}}.
\endaligned
\end{equation}

There is a natural and important question in this context:

{\it Are there other integrable dynamical systems related to
discriminantly separable polynomials?}

\

Referring to this question, we have already constructed discrete
integrable systems related to discriminantly separable polynomials
which are associated to quad-graphs in \cite{DK1}. Now we are
constructing a new class of integrable continuous systems,
generalizing the Kowalevski top. Thus we call the members of that
class -- {\it systems of the Kowalevski type}. A relationship with
the discriminantly separable polynomials gives us possibility to
perform an effective integration procedure, and to provide an
explicit integration formulae in the theta-functions, in general,
associated with genus two curves, as in the original case of
Kowalevski. The first examples of such systems have been constructed
in \cite{DK}. Let us point out here one very important moment
regarding the Kowalevski top and all the systems of Kowalevski type:
the main issue in integration procedures is related to the elliptic
curves $\Gamma_1, \Gamma_2$ and the two-valued groups related to
these elliptic curves, although, as we know, the final part of
integration of the Kowalevski is related to a genus two curve. This,
in a sense unexpected, surprising or possibly not clarified enough
jump in genus from $1$ to $2$ is now explained in our Theorem
\ref{th:sufficient}, and it becomes a trade mark of all systems of the
Kowalevski type.

\

\subsection{An overview of the paper}

\

The paper is organized as follows. In the Section \ref{sec:analysis}
we introduce \textit{systems of the Kowalevski type} of differential
equations by generalizing Kowalevski's considerations. Subsection
\ref{subsec:exKST} presents the Sokolov system from \cite{Sok1} and
\cite{KST} as an example of systems of the Kowalevski type. As a
result, we are providing a full explanation of its integration
procedure. In the third subsection we construct an new example of
such system and in the fourth subsection we give integration
procedure in terms of genus two theta functions and related to them
$P_i, P_{ij}$ functions. Following generalized K\"{o}tter
transformation from \cite{Drag3}, here we reformulate it for
polynomials of degree three. That gives us possibility to integrate
systems in two ways, using properties of $\wp$-function and using
generalized K\"{o}tter transformation. The subsection
\ref{subsec:anothermethod} presents one more method for obtaining
systems of Kowalevski type by studying first integrals and invariant
relations. The Kowalevski top may be seen as a special subcase, and
it serves as a principle
motivating example.\\

In the Section \ref{sec:def1} we consider a simple deformation of
Kowalevski case by using simplest linear gauge transformation on the Kowalevski fundamental equation. As the outcome
we get \textit{the Jurdjevic elasticae} from \cite{Jur}. Systems
analogue to the Jurdjevic elasticae have been obtained before by
Komarov and Kuznetsov (see \cite{Ko} and \cite{KoK}), and they also
serve as motivating examples for our study of systems of Kowalevski
type. We give here the explicit solutions of all these problems in terms of $P_i,P_{ij}$ functions. Another system
 of Kowalevski type is constructed in the Section \ref{sec:def2} following \cite{Kow}
  and by use of a sequence of skilful tricks and identities.\\

Since the main part of this paper is motivated by the Kowalevski top
and by the Kowalevski integration procedure, and since it is the
milestone of the classical integrable systems, we find it useful to
extract some of the key moments of the Kowalevski work \cite{Kow},
here.

\

\subsection{Fundamental steps in the Kowalevski integration procedure}

\

Let us recall briefly  that the Kowalevski top \cite{Kow} is a heavy
spinning top rotating about a fixed point, under the conditions
$I_1=I_2=2I_3,\,I_3=1$, $y_0=z_0=0$. Here $(I_1,I_2,I_3)$ denote the
principal moments of inertia, $(x_0,y_0,z_0)$ is the center of mass,
$c=Mgx_0$, $M$ is the mass of the top, $(p,q,r)$  is the vector of
angular velocity and $(\gamma_1,\gamma_2,\gamma_3)$ are cosines of
the angles between $z$-axis of the fixed coordinate system and the
axes of the coordinate system that is attached to the top and whose
origin coincides with the fixed point. Then the equations of motion
take the following form, see \cite{Kow}, \cite {Gol}:

\begin{equation}\label{eq:Kowsyst}
\aligned
2 \dot p &= qr \qquad \qquad \qquad \dot \gamma_1 =r \gamma_2 - q \gamma_3\\
2 \dot q &=-pr-c\gamma_3 \qquad \quad \dot \gamma_2 =p \gamma_3 - r \gamma_1\\
\dot r &=c \gamma_2 \qquad \qquad \qquad \dot \gamma_3 =q \gamma_1 - p \gamma_2.
\endaligned
\end{equation}

System (\ref{eq:Kowsyst}) has three well known integrals of motion
and a fourth integral
 discovered by Kowalevski
\begin{equation}\label{eq:Kowsystint}
\aligned
&2(p^2+q^2)+r^2 =2c\gamma_1+6l_1\\
&2(p\gamma_1+q\gamma_2)+r\gamma_3 =2l\\
&\gamma_1^2+\gamma_2^2+\gamma_3^2 =1\\
&\left((p+i q)^2+\gamma_1+i\gamma_2\right)\left((p-i q)^2+\gamma_1-i\gamma_2\right)=k^2.
\endaligned
\end{equation}

After the change of variables
\begin{equation}\label{eq:Kowch1}
\aligned x_1 &= p + i q, \quad e_1 = x_1^2 + c(\gamma_1 + i \gamma_2)\\
x_2 &= p - i q, \quad e_2 = x_2^2 + c(\gamma_1 - i
\gamma_2)
\endaligned
\end{equation}
the first integrals (\ref{eq:Kowsystint}) transform into
\begin{equation}\label{eq:Kowinttrans}
\aligned
r^2 &=E+e_1+e_2\\
rc\gamma_3 &=G-x_2e_1-x_1e_2\\
c^2\gamma_3^2 &=F+x_2^2e_1+x_1^2e_2\\
e_1e_2 &=k^2,
\endaligned
\end{equation}
with $E= 6l_1-(x_1+x_2)^2,\, F= 2cl +x_1x_2(x_1+x_2),\,
G= c^2-k^2-x_1^2 x_2^2.$ From the first integrals, one gets
$$(E+e_1+e_2)(F+x_2^2e_1+x_1^2e_2)-(G-x_2e_1-x_1e_2)^2=0$$
which  can be rewritten in the form
\begin{equation}\label{eq:Kowid1}
e_1 P(x_2)+e_2P(x_1)+R_1(x_1,x_2)+k^2(x_1-x_2)^2=0
\end{equation}
where the polynomial $P$ is
$$P(x_i)=x_i^2E+2 x_1F+G =-x_i^4+6l_1 x_i^2 +4lc x_i+c^2-k^2,\, i=1,2$$
and
\begin{equation}
\nonumber
\aligned
&R_1(x_1,x_2)=EG-F^2\\
&=-6l_1x_1^2x_2^2-(c^2-k^2)(x_1+x_2)^2-4lc(x_1+x_2)x_1 x_2+6l_1(c^2-k^2)-4l^2c^2.
\endaligned
\end{equation}
Note that $P$ from the formula above depends only on one variable,
which is not obvious from its definition. Denote
$$R(x_1,x_2)=Ex_1x_2+F(x_1+x_2)+G.$$

>From (\ref{eq:Kowid1}), Kowalevski gets
\begin{equation}\label{eq:Koweq1}
(\sqrt{P(x_1)e_2}\pm\sqrt{P(x_2)e_1})^2=-(x_1-x_2)^2k^2\pm
2k\sqrt{P(x_1)P(x_2)}-R_1(x_1,x_2).
\end{equation}

After a few transformations, (\ref{eq:Koweq1}) can be written in the form
\begin{equation}\label{eq:kott1}
\left[\sqrt{e_1}\frac{\sqrt{P(x_2)}}{x_1-x_2}\pm
\sqrt{e_2}\frac{\sqrt{P(x_1)}}{x_1-x_2}\right]^2=(w_1\pm k)(w_2 \mp
k),
\end{equation}
where $w_1, w_2$ are the solutions of an equation, quadratic in $s$:
\begin{equation}\label{eq:fundKowrel}
Q(s, x_1,x_2)=(x_1-x_2)^2s^2-2R(x_1,x_2)s - R_1(x_1,x_2)=0.
\end{equation}
The quadratic equation (\ref{eq:fundKowrel}) is known as {\bf the
Kowalevski fundamental equation}. The discriminant separability
condition for $Q(s,x_1,x_2)$ is satisfied
$$\mathcal D_{s}(Q)(x_1,x_2)=4P(x_1)P(x_2)$$
$$\mathcal D_{x_1}(Q)(s,x_2)=-8J(s)P(x_2),\,\mathcal D_{x_2}(Q)(s,x_1)=-8J(s)P(x_1)$$
with
$$J(s)=s^3+3l_1s^2+s(c^2-k^2)+3l_1(c^2-k^2)-2l^2c^2.$$
The equations of motion (\ref{eq:Kowsyst}) can be rewritten in new
variables $(x_1, x_2, e_1, e_2, r, \gamma_3)$ in the form:
\begin{equation}\label{eq:Kowsyst2}
\aligned
2 \dot x_1 &= - i f_1, \quad \dot e_1 = -m e_1\\
2 \dot x_2 &= i f_2, \quad \quad \dot e_2 = m e_2.
\endaligned
\end{equation}
There are two additional differential equations for $\dot r$ and
$\dot \gamma_3$. Here $m=i r$ and $ f_1=r x_1 +c \gamma_3,\,
f_2=r x_2 +c \gamma_3.$ One can easily check that

\begin{equation}\label{eq:Kowsyst3}
f_1^2 = P(x_1) + e_1 (x_1-x_2)^2, \quad f_2^2 = P(x_2) + e_2 (x_1-
x_2)^2.
\end{equation}

\

Further integration procedure is described in \cite{Kow}, and in the
Subsection \ref{subsec:exintegration}, we are going to develop
analogue techniques for more general systems in details.

\

\section{Systems of the Kowalevski type}\label{sec:analysis}

\

\subsection{Systems of the Kowalevski type. Definition}\label{subsec:intro}

\

Now, we are going to introduce a class of dynamical systems, which
generalize the Kowalevski top.  Instead of the Kowalevski
fundamental equation (see formula (\ref{eq:fundKowrel})), we start
here from an arbitrary discriminantly separable polynomial of degree
two in each of three variables.

Given a discriminantly separable polynomial of the second degree in
each of three variables
\begin{equation}\label{eq:discriminantsep}
\mathcal{F}(x_1, x_2, s):=A(x_1, x_2)s^2+B(x_1, x_2)s + C(x_1,x_2),
\end{equation}
such that
$$
\mathcal{D}_s(\mathcal{F})(x_1,x_2)=B^2-4AC=4P(x_1)P(x_2),
$$
and
$$
\aligned
\mathcal{D}_{x_1}(\mathcal{F})(s,x_2)&=4P(x_2)J(s)\\
\mathcal{D}_{x_2}(\mathcal{F})(s,x_1)&=4P(x_1)J(s).
\endaligned
$$
Suppose, that a  given system in variables
$x_1,\,x_2,\,e_1,\,e_2,\,r,\,\gamma_3$, after some transformations
reduces to

\begin{equation}\label{eq:analysis}
\aligned 2 \dot x_1 &= - i f_1,\quad \dot e_1 = -m e_1,\\
2 \dot x_2 &= i f_2,\quad \quad \dot e_2 = m e_2,
\endaligned
\end{equation}

where
\begin{equation}\label{eq:analysis1}
f_1^2 = P(x_1) + e_1 A(x_1,x_2), \quad  f_2^2 = P(x_2) + e_2 A(x_1,
x_2).
\end{equation}

Suppose additionally, that the first integrals and invariant relations of the initial system
reduce to a relation
\begin{equation}\label{eq:integral}
 P(x_2)e_1+P(x_1)e_2=C(x_1,x_2)-e_1e_2A(x_1, x_2).
\end{equation}

The equations for $\dot r$ and $\dot \gamma_3$ are not
specified for the moment and $m$ is a function of system's variables.\\

If a system satisfies the above assumptions we will call it {\it a
system of the Kowalevski type}. As it has been pointed out in the
Introduction, see formulae (\ref{eq:Kowid1}, \ref{eq:fundKowrel},
\ref{eq:Kowsyst2},\ref{eq:Kowsyst3}), the Kowalevski top is an
example of the systems of the Kowalevski type.\\

The following theorem is quite general, and concerns all the systems
of the Kowalevski type. It explains in full a subtle mechanism of a
quite miraculous jump in genus, from one to two, in integration
procedure, which has been observed in the Kowalevski top, and now it
is going to be established as a characteristic property of the whole
new class of systems.

\begin{theorem}\label{th:sufficient}
Given a system which reduces to (\ref{eq:analysis},
\ref{eq:analysis1}, \ref{eq:integral}). Then the system is
linearized on the Jacobian of the curve
$$
y^2=J(z)(z-k)(z+k),
$$
where $J$ is a polynomial factor of the discriminant of $\mathcal{F}$ as a
polynomial in $x_1$ and $k$ is a constant such that
$$
e_1e_2=k^2.
$$
\end{theorem}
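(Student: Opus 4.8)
The plan is to mimic the classical Kowalevski integration but now driven purely by the discriminant separability hypotheses. The goal is to produce two differentials whose sum and difference decouple into genus-one integrals, and then to show that the natural variables one is forced to introduce satisfy a single quadratic equation whose coefficients assemble the sextic $y^2 = J(z)(z-k)(z+k)$.

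Here is how I would proceed. First I would take the relation (\ref{eq:integral}), namely $P(x_2)e_1 + P(x_1)e_2 = C - e_1 e_2 A$, and use $e_1 e_2 = k^2$ together with the discriminant identity $B^2 - 4AC = 4P(x_1)P(x_2)$ to rewrite everything in terms of $A,B,P$ and the square root $\sqrt{P(x_1)P(x_2)}$. The identity $C = (B^2 - 4P(x_1)P(x_2))/(4A)$ lets me eliminate $C$ and obtain a perfect-square structure for $P(x_1)e_2 + P(x_2)e_1$ analogous to (\ref{eq:Koweq1}): I expect to reach
\begin{equation}\nonumber
\left(\sqrt{P(x_1)e_2} \pm \sqrt{P(x_2)e_1}\right)^2 = -k^2 A \pm 2k\sqrt{P(x_1)P(x_2)} - \frac{B^2 - 4P(x_1)P(x_2)}{4A} + (\text{terms}),
\end{equation}
and after dividing by $A$ and completing the square in $B/(2A)$, the right-hand side should factor as $(w_1 \pm k)(w_2 \mp k)$ exactly as in (\ref{eq:kott1}), where $w_1, w_2$ are the two roots in $s$ of $\mathcal{F}(x_1,x_2,s) = 0$. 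This is the step where discriminant separability does the essential work: it is precisely the condition $\mathcal{D}_s(\mathcal{F}) = 4P(x_1)P(x_2)$ that makes the square root $\sqrt{P(x_1)P(x_2)}$ appear symmetrically and allows the factorization into a product of shifted roots.

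Next, I would differentiate the roots $w_1, w_2$ along the flow (\ref{eq:analysis}). Using $2\dot x_1 = -if_1$, $2\dot x_2 = if_2$ with $f_i^2 = P(x_i) + e_i A$, and the separability of the other two discriminants $\mathcal{D}_{x_i}(\mathcal{F}) = 4P(x_j)J(s)$, I expect the implicit differentiation of $\mathcal{F}(x_1,x_2,w_\alpha)=0$ to collapse: the partial $\partial \mathcal{F}/\partial x_i$ evaluated at $s = w_\alpha$ is, up to sign, $\sqrt{\mathcal{D}_{x_i}} = 2\sqrt{P(x_j)J(w_\alpha)}$, and $\partial\mathcal{F}/\partial s$ at $s=w_\alpha$ is $\pm\sqrt{\mathcal{D}_s} = \pm 2\sqrt{P(x_1)P(x_2)}$. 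Combining these with the equations of motion should yield clean expressions for $\dot w_1, \dot w_2$ of the form $\dot w_\alpha = (\text{const})\sqrt{J(w_\alpha)}\cdot g_\alpha$, where the factors $g_\alpha$ carry the dependence on $\sqrt{(w_\alpha - k)(w_\alpha + k)}$ coming from the factorization (\ref{eq:kott1}) and the evolution $\dot e_i = \mp m e_i$ with $e_1 e_2 = k^2$. The upshot should be that $dw_1/\sqrt{J(w_1)(w_1-k)(w_1+k)}$ and $dw_2/\sqrt{J(w_2)(w_2-k)(w_2+k)}$ are each proportional to $dt$, so that their sum is a holomorphic differential on the genus-two curve $y^2 = J(z)(z-k)(z+k)$ and the flow is linear on its Jacobian.

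The main obstacle I anticipate is the bookkeeping in the differentiation step: tracking the signs and the branch choices of the various square roots $\sqrt{P(x_i)}$, $\sqrt{e_i}$, $\sqrt{J(w_\alpha)}$ so that the $\pm$ in (\ref{eq:kott1}) matches the $\mp$ in the $e_i$ evolution, and verifying that the cross terms cancel to leave exactly the factor $(w_\alpha - k)(w_\alpha + k)$ under the radical rather than some spurious extra factor. In the classical Kowalevski case this cancellation is a small miracle engineered by the specific form of the integrals; here I must check that it follows formally from (\ref{eq:integral}) and $e_1 e_2 = k^2$ alone, with no further input. I would also need to confirm that $w_1 \ne w_2$ generically so the two differentials are genuinely independent, and that $J$ being a cubic (degree three) combined with the quadratic factor $(z-k)(z+k)$ indeed produces a degree-five (hence genus-two) hyperelliptic curve — which is exactly the promised jump in genus from the elliptic curves $\Gamma_1, \Gamma_2$ to the genus-two Jacobian.
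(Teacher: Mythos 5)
Your plan follows essentially the same route as the paper's proof: use $e_1e_2=k^2$ and relation (\ref{eq:integral}) to factor the perfect squares as $(s_1\pm k)(s_2\mp k)$ with $s_1,s_2$ the roots of $\mathcal{F}=0$, express $f_i^2$ (and hence $\dot x_i$) through $\sqrt{s_\alpha^2-k^2}/(s_1-s_2)$, and then convert $dx_i/\sqrt{P(x_i)}$ into $ds_\alpha/\sqrt{J(s_\alpha)}$ via discriminant separability to land on the Abel map for $y^2=J(z)(z-k)(z+k)$. The only cosmetic difference is that you rederive the change of variables by implicit differentiation of $\mathcal{F}(x_1,x_2,s_\alpha)=0$ using $\partial\mathcal{F}/\partial x_i=\pm\sqrt{\mathcal{D}_{x_i}}$, whereas the paper simply cites this as Corollary 1 of \cite{Drag3}; the ``small miracle'' cancellation you flag is exactly the identity $(s_1-s_2)^2+\bigl(\sqrt{(s_1-k)(s_2+k)}+\sqrt{(s_1+k)(s_2-k)}\bigr)^2=\bigl(\sqrt{s_1^2-k^2}+\sqrt{s_2^2-k^2}\bigr)^2$ used in the paper, and it does follow formally as you anticipate.
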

\begin{proof} Indeed, from the equations of motion on $e_i$ we get
$$
e_1e_2=k^2,
$$
with some constant $k$. Now, we get
$$
\left(\sqrt{e_1}\sqrt{P(x_2)}\pm \sqrt{e_2}\sqrt{P(x_1)}\right)^2=
C(x_1,x_2)-k^2A(x_1, x_2)\pm 2\sqrt{P(x_1)P(x_2)}k.$$ From the last
relations, we get
$$
\left(\sqrt{e_1}\sqrt{\frac{P(x_2)}{A}}+
\sqrt{e_2}\sqrt{\frac{P(x_1)}{A}}\right)^2=(s_1+k)(s_2-k)
$$
and
$$
\left(\sqrt{e_1}\sqrt{\frac{P(x_2)}{A}}-
\sqrt{e_2}\sqrt{\frac{P(x_1)}{A}}\right)^2=(s_1-k)(s_2+k),
$$
where $s_1, s_2$ are the solutions of the quadratic equation
$
\mathcal{F}(x_1, x_2, s)=0
$
in $s$. From the last equations we get
$$
\aligned
2\sqrt{e_1}\sqrt{\frac{P(x_2)}{A}}&=\sqrt{(s_1+k)(s_2-k)}+\sqrt{(s_1-k)(s_2+k)}\\
2\sqrt{e_2}\sqrt{\frac{P(x_1)}{A}}&=\sqrt{(s_1+k)(s_2-k)}-\sqrt{(s_1-k)(s_2+k)}.
\endaligned
$$
Since $s_i$ are solutions of the quadratic equation $F(x_1, x_2,
s_i)=0$, using Vi\`{e}te formulae and discriminant separability
condition, we get
\begin{equation}
s_1+s_2=-\frac{B}{A},\quad s_2-s_1=\frac{\sqrt{4P(x_1)P(x_2)}}{A}.
\end{equation}
>From the last equation, we get
$$
(s_1-s_2)^2=4\frac{P(x_1)P(x_2)}{A^2}.
$$
Using the last equation, we have
$$
\aligned f_1^2&=\frac{P(x_1)}{(s_1-s_2)^2}\left
[(s_1-s_2)^2+4e_1\frac{P(x_2)}{A^2}A\right]\\
&=\frac{P(x_1)}{(s_1-s_2)^2}\left
[(s_1-s_2)^2+\left(\sqrt{(s_1-k)(s_2+k)}+\sqrt{(s_1+k)(s_2-k)}\right)^2\right]\\
&=\frac{P(x_1)}{(s_1-s_2)^2}\left
[\sqrt{(s_1-k)(s_1+k)}+\sqrt{(s_2+k)(s_2-k)}\right]^2.
\endaligned
$$
Similarly
$$
f_2^2=\frac{P(x_2)}{(s_1-s_2)^2}\left
[\sqrt{(s_1-k)(s_1+k)}-\sqrt{(s_2+k)(s_2-k)}\right]^2.
$$
>From the last two equations and from the equations of motion, we get
$$
\aligned
4\dot{x}_1^2&=-\frac{P(x_1)}{(s_1-s_2)^2}\left[ \sqrt{s_1^2-k^2}+\sqrt{s_2^2-k^2}\right]^2\\
4\dot{x}_2^2&=-\frac{P(x_2)}{(s_1-s_2)^2}\left[ \sqrt{s_1^2-k^2}-\sqrt{s_2^2-k^2}\right]^2,
\endaligned
$$
and then
$$
\aligned \frac {d\,x_1}{\sqrt{P(x_1)}}+\frac {d\,x_2}{\sqrt{P(x_2)}}
&=-i\frac{\sqrt{(s_1-k)(s_1+k)}}{s_1-s_2}dt\\
\frac {d\,x_1}{\sqrt{P(x_1)}}-\frac {d\,x_2}{\sqrt{P(x_2)}}
&=-i\frac{\sqrt{(s_2-k)(s_2+k)}}{s_1-s_2}dt.
\endaligned
$$
>From the discriminant separability, one gets (see Corollary 1 from
\cite{Drag3}):
\begin{equation}\label{eq:kowchange}
\aligned
\frac{dx_1}{\sqrt{P(x_1)}}+\frac{dx_2}{\sqrt{P(x_2)}}&=\frac{ds_1}{\sqrt{J(s_1)}}\\
-\frac{dx_1}{\sqrt{P(x_1)}}+\frac{dx_2}{\sqrt{P(x_2)}}&=\frac{ds_2}{\sqrt{J(s_2)}}
\endaligned
\end{equation}
and finally
\begin{equation}\label{eq:kowvareq}
\aligned
\frac{ds_1}{\sqrt{\Phi(s_1)}}+\frac{ds_2}{\sqrt{\Phi(s_2)}}&=0\\
\frac{s_1\,ds_1}{\sqrt{\Phi(s_1)}}+\frac{s_2\,ds_2}{\sqrt{\Phi(s_2)}}&=-i dt,
\endaligned
\end{equation}
where
$$
\Phi(s)=J(s)(s-k)(s+k),
$$
where $\Phi $ is a polynomial of degree up to six.\\

Thus, relations (\ref{eq:kowvareq}) define the Abel map on a curve
$y^2=\Phi(s)$, which has genus $2$ if the roots of the polynomial
$\Phi$ are distinct.
\end{proof}

The last Theorem basically formalizes the original considerations of
Kowalevski, in a slightly more general context of the discriminantly
separable polynomials.

\

We are going to present below the Sokolov system  \cite{Sok1} as an
example of a system of the Kowalevski type, and to provide one new
example of the systems of the Kowalevski type.

\subsection{Example: Sokolov system as a system of the Kowalevski type}\label{subsec:exKST}

\

Sokolov in \cite{Sok1}, \cite{Sok2} considered the Hamiltonian
\begin{equation}\label{eq:HamKST}
\hat{H}=M_1^2+M_2^2+2M_3^2+2c_1\gamma_1+2c_2(\gamma_2 M_3-\gamma_3 M_2)
\end{equation}
on $e(3)$ with the Lie-Poisson brackets
\begin{equation}\label{eq:PBe3}
\{M_i, M_j\}=\epsilon_{ijk}M_k,\quad \{M_i,
\gamma_j\}=\epsilon_{ijk}\gamma_k,\quad \{\gamma_i,
\gamma_j\}=0
\end{equation}
where $\epsilon_{ijk}$ is the totally skew-symetric tensor. In this
section we will prove that this system belongs to the class of systems of the Kowalevski type.

\medskip

The Lie-Poisson bracket (\ref{eq:PBe3}) has two well-known Casimir
functions
$$\aligned
\gamma_1^2+\gamma_2^2+\gamma_3^2&=a,\\
\gamma_1M_1+\gamma_2M_2+\gamma_3M3&=b.
\endaligned
$$

Following \cite{KST} and \cite{Kow} we introduce the new variables
$$z_1=M_1+iM_2, \qquad z_2=M_1-iM_2$$
and
$$
\aligned
e_1&=z_1^2-2c_1(\gamma_1+i\gamma_2)-c_2^2 a-c_2(2\gamma_2 M_3-2\gamma_3 M_2+2i
(\gamma_3 M_1-\gamma_1 M_3)),\\
e_2&=z_2^2-2c_1(\gamma_1-i\gamma_2)-c_2^2 a-c_2(2\gamma_2 M_3-2\gamma_3 M_2+2i
(\gamma_1 M_3-\gamma_3 M_1)).
\endaligned
$$
The second integral of motion for the system (\ref{eq:HamKST})  may
be rewritten as
\begin{equation}\label{eq:KKst}
e_1e_2=k^2.
\end{equation}
The equations of motion in  the new variables $z_i,e_i$ can be
written in the form of (\ref{eq:analysis}) and (\ref{eq:analysis1}),
and this corresponds to the definition of the systems of the
Kowalevski type. It is easy to show that:
$$\dot e_1=-4i M_3 e_1, \qquad \dot{e}_2=4iM_3 e_2$$
and
\begin{equation}\label{eq:z1z2dot}
\aligned
-\dot{z_1}^2&=P(z_1)+e_1(z_1-z_2)^2,\\
-\dot{z_2}^2&=P(z_2)+e_2(z_1-z_2)^2
\endaligned
\end{equation}
where P is a polynomial of fourth degree given by
\begin{equation}\label{eq:PKST}
P(z)= -z^4+2H z^2-8 c_1 b z-k^2+4a c_1^2-2c_2^2(2b^2-Ha)+c_2^4 a.
\end{equation}

In order to demonstrate that the Sokolov system belongs to the class
of the systems of the Kowalevski type, we still need to show that a
relation of the form  (\ref{eq:integral}) is satisfied and we have
to relate it with certain  discriminantly separable polynomial of
the form of (\ref{eq:discriminantsep}).

Starting from the equations
$$\dot{z_1}=-2M_3(M_1-iM_2)+2c_2(\gamma_1 M_2-\gamma_2 M_1)+2c_1\gamma_3$$
and
$$\dot{z_2}=-2M_3(M_1+iM_2)+2c_2(\gamma_1 M_2-\gamma_2 M_1)+2c_1\gamma_3$$
one can get the following

\begin{lemma} The product of the derivatives of the variables $z_i$
is:

\begin{equation}\label{eq:product}
\dot{z_1}\cdot
\dot{z_2}=-\left(F(z_1,z_2)+(H+c_2^2a)(z_1-z_2)^2\right),
\end{equation}
where $F(z_1,z_2)$ is given by:
\begin{equation}\label{eq:FKST1} F(z_1,z_2)=-\frac{1}{2}\left( P(z_1)+P(z_2) +(z_1^2-z_2^2)^2
\right).
\end{equation}
\end{lemma}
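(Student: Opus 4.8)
The plan is to reduce the product $\dot z_1\dot z_2$ to a function of $z_1,z_2$ alone by leaning on the relations (\ref{eq:z1z2dot}) just recorded, rather than expanding the two displayed first-order expressions blindly. The key algebraic observation is the identity
$$
2\dot z_1\dot z_2=(\dot z_1+\dot z_2)^2-\dot z_1^2-\dot z_2^2,
$$
which lets me import $\dot z_1^2$ and $\dot z_2^2$ directly from (\ref{eq:z1z2dot}), namely $\dot z_i^2=-P(z_i)-e_i(z_1-z_2)^2$. Substituting these and recalling the definition (\ref{eq:FKST1}) of $F$, a short manipulation shows that the assertion of the Lemma is \emph{equivalent} to the single relation
$$
(\dot z_1+\dot z_2)^2=(z_1^2-z_2^2)^2-\big[(e_1+e_2)+2(H+c_2^2a)\big](z_1-z_2)^2.
$$
This already isolates the dynamical content: the polynomial $P$, which carries the constants $b$ and $k$, has been absorbed into (\ref{eq:z1z2dot}), so neither the Casimir $b$ nor the constant $k$ needs to be revisited here.

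Next I would carry out two elementary reductions. First, summing the defining expressions for $e_1$ and $e_2$, the imaginary pieces $c_2(\gamma_3M_1-\gamma_1M_3)$ cancel in pairs and, after using the Hamiltonian (\ref{eq:HamKST}) to rewrite $2c_1\gamma_1+2c_2(\gamma_2M_3-\gamma_3M_2)=H-(M_1^2+M_2^2+2M_3^2)$, one obtains the clean form
$$
e_1+e_2=4M_1^2+4M_3^2-2c_2^2a-2H,
$$
so that the bracket in the equivalent relation collapses to $e_1+e_2+2(H+c_2^2a)=4M_1^2+4M_3^2$. Second, since $z_1+z_2=2M_1$ and $z_1-z_2=2iM_2$, one has $(z_1^2-z_2^2)^2=-16M_1^2M_2^2$ and $(z_1-z_2)^2=-4M_2^2$; substituting these, the right-hand side of the equivalent relation collapses to $16M_2^2M_3^2$.

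It then remains only to check that $(\dot z_1+\dot z_2)^2=16M_2^2M_3^2$. At this point it is cleanest to use that $z_1+z_2=2M_1$, so that $\dot z_1+\dot z_2=2\dot M_1$, and to read off the equation of motion $\dot M_1=\{M_1,\hat H\}=-2M_2M_3$ directly from the Lie-Poisson brackets (\ref{eq:PBe3}) — the $c_1$ and $c_2$ contributions to $\{M_1,\hat H\}$ cancel identically. Thus $(\dot z_1+\dot z_2)^2=4\dot M_1^2=16M_2^2M_3^2$, which matches the previous paragraph and completes the proof.

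The main obstacle, and the only place where genuine care is required, is the reduction of $e_1+e_2$: the bilinear terms $c_2(\gamma_2M_3-\gamma_3M_2)$ and the $c_2^2a$ contribution must be matched precisely against the Hamiltonian, and the cancellation of the imaginary parts must be verified. A more pedestrian alternative, which avoids (\ref{eq:z1z2dot}) entirely, is to multiply the two displayed expressions directly, giving $\dot z_1\dot z_2=4M_3^2(M_1^2+M_2^2)-4M_1M_3U+U^2$ with $U=2c_2(\gamma_1M_2-\gamma_2M_1)+2c_1\gamma_3$, and then to reduce every monomial using the Hamiltonian, both Casimirs, and the second integral $e_1e_2=k^2$ (the last being what produces the constant $-k^2$ sitting inside $P$, hence inside $F$); this route is correct but demands substantially heavier and more error-prone bookkeeping, which is exactly why routing through (\ref{eq:z1z2dot}) is preferable.
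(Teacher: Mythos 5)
Your proof is correct, but it takes a genuinely different route from the paper's. The paper obtains (\ref{eq:product}) the direct way: it displays the first-order expressions for $\dot z_1$ and $\dot z_2$, multiplies them out, and reduces the resulting monomials using the Hamiltonian and the Casimirs --- essentially the ``pedestrian alternative'' you describe in your last paragraph. Your main argument instead polarizes, $2\dot z_1\dot z_2=(\dot z_1+\dot z_2)^2-\dot z_1^2-\dot z_2^2$, and imports $\dot z_i^2=-P(z_i)-e_i(z_1-z_2)^2$ from (\ref{eq:z1z2dot}), so that the whole lemma collapses to two small facts: $e_1+e_2+2(H+c_2^2a)=4M_1^2+4M_3^2$ and $\dot M_1=-2M_2M_3$, whence $(\dot z_1+\dot z_2)^2=4\dot M_1^2=16M_2^2M_3^2$. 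Both check out against the brackets (\ref{eq:PBe3}) and the definitions of $e_1,e_2$ (the imaginary pieces do cancel in the sum, and the $c_1,c_2$ contributions to $\{M_1,\hat H\}$ do cancel), and substituting $(z_1-z_2)^2=-4M_2^2$, $(z_1^2-z_2^2)^2=-16M_1^2M_2^2$ the two sides of your ``equivalent relation'' both reduce to $16M_2^2M_3^2$, so the argument closes. What your route buys is that the polynomial $P$ --- and with it the constants $k$ and $b$ --- never has to be expanded, and that you bypass the paper's displayed formulas for $\dot z_1,\dot z_2$, which as printed cannot be the decompositions $\dot M_1\pm i\dot M_2$ (their real parts contain $\gamma$-terms that are absent from $\{M_1,\hat H\}$), although their product happens to come out right. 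The price is that the lemma is no longer self-contained: it now depends on (\ref{eq:z1z2dot}), which the paper asserts without proof immediately beforehand; since (\ref{eq:z1z2dot}) is established independently and the subsequent identity (\ref{eq:KSTanalysis}) merely equates $(\dot z_1\dot z_2)^2$ with $\dot z_1^2\cdot\dot z_2^2$, no circularity is introduced.
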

 After equating
the square of $\dot{z_1}\dot{z_2}$ from the relation
(\ref{eq:product}) with product $\dot{z_1}^2 \cdot \dot{z_2}^2$ with $\dot{z_i}^2,\,i=1,2,$ from
(\ref{eq:z1z2dot}) we get
\begin{lemma} The variables $z_1, z_2, e_1, e_2$ of the Sokolov
system satisfy the following identity:

\begin{equation}\label{eq:KSTanalysis}
\aligned
&(z_1-z_2)^2[2F(z_1,z_2)(H+c_2^2a)+(z_1-z_2^2)^4(H+c_2^2a)^2-P(z_1)e_2-P(z_2)e_1\\
&-e_1e_2(z_1-z_2)^2]+F^2(z_1,z_2)-P(z_1)P(z_2)=0.
\endaligned
\end{equation}
\end{lemma}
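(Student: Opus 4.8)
The plan is to eliminate the time derivatives $\dot z_1,\dot z_2$ between the product formula (\ref{eq:product}) of the preceding lemma and the two relations (\ref{eq:z1z2dot}), by evaluating the single quantity $\dot z_1^2\dot z_2^2$ in two independent ways and setting the two results equal to each other.

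First I would simply square (\ref{eq:product}). Since
$$
\dot z_1\dot z_2=-\bigl(F(z_1,z_2)+(H+c_2^2a)(z_1-z_2)^2\bigr),
$$
squaring gives
$$
\dot z_1^2\dot z_2^2=\bigl(F(z_1,z_2)+(H+c_2^2a)(z_1-z_2)^2\bigr)^2,
$$
an expression involving only $F$, the constants $H,c_2,a$, and the difference $(z_1-z_2)$. Independently, multiplying the two relations in (\ref{eq:z1z2dot})—whose right-hand sides each carry a sign that cancels in the product—produces
$$
\dot z_1^2\dot z_2^2=\bigl(P(z_1)+e_1(z_1-z_2)^2\bigr)\bigl(P(z_2)+e_2(z_1-z_2)^2\bigr),
$$
which is where the dynamical variables $e_1,e_2$ enter.

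The final step is purely algebraic. Expanding the squared product gives $F^2+2F(H+c_2^2a)(z_1-z_2)^2+(H+c_2^2a)^2(z_1-z_2)^4$, while the product of the two relations from (\ref{eq:z1z2dot}) expands to $P(z_1)P(z_2)+(z_1-z_2)^2\bigl(e_2P(z_1)+e_1P(z_2)\bigr)+e_1e_2(z_1-z_2)^4$. Subtracting one from the other and gathering every term that carries a factor $(z_1-z_2)^2$ into a single bracket yields precisely the identity (\ref{eq:KSTanalysis}). I do not anticipate any genuine obstacle: the entire content of the lemma is this elimination, and the only point requiring care is the coefficient bookkeeping when the common factor $(z_1-z_2)^2$ is pulled out, matching the constant, the $(z_1-z_2)^2$, and the $(z_1-z_2)^4$ parts on the two sides. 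The one input that is not self-contained is the product formula (\ref{eq:product}) itself, which rests on the explicit equations of motion for $z_i$ together with the Poisson structure (\ref{eq:PBe3}) and the Casimir functions; granting that formula, the present lemma follows immediately.
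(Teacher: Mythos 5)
Your proposal is correct and is exactly the paper's own argument: the authors state that the identity is obtained precisely by equating the square of $\dot z_1\dot z_2$ from (\ref{eq:product}) with the product $\dot z_1^2\cdot\dot z_2^2$ computed from (\ref{eq:z1z2dot}), which is the elimination you carry out. Your expansion also implicitly corrects the evident typo in the displayed identity, where the term $(z_1-z_2^2)^4(H+c_2^2a)^2$ inside the bracket should read $(z_1-z_2)^2(H+c_2^2a)^2$.
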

Denote by $C(z_1,z_2)$ a biquadratic polynomial such that
$$F^2(z_1,z_2)-P(z_1)P(z_2)=(z_1-z_2)^2C(z_1,z_2).$$

\begin{proposition} The Sokolov system is a system of the Kowalevski
type. It can be explicitly integrated in the theta-functions of
genus 2.
\end{proposition}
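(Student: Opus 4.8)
The plan is to check, one defining property at a time, that the Sokolov system falls under the template of Subsection \ref{subsec:intro}, and then to let Theorem \ref{th:sufficient} do the rest. First I would match the dynamical part. The relations $\dot e_1 = -4iM_3 e_1$, $\dot e_2 = 4iM_3 e_2$ together with the integral $e_1 e_2 = k^2$ from (\ref{eq:KKst}) realize the $e$-equations of (\ref{eq:analysis}) with $m = 4iM_3$ and the required constant $k$; and (\ref{eq:z1z2dot}) is literally of the shape (\ref{eq:analysis1}) once we set $A(z_1,z_2) = (z_1-z_2)^2$ and identify $f_i^2 = -\dot z_i^2$, the first-order equations of (\ref{eq:analysis}) then following up to the harmless constant normalization relating $f_i$ to $\dot z_i$. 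So the only structural hypothesis that still needs to be produced is the integral relation (\ref{eq:integral}).

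Second, I would extract (\ref{eq:integral}) from the algebra already assembled. Equating $\dot z_1^2\,\dot z_2^2$ computed from (\ref{eq:product})--(\ref{eq:FKST1}) with the product of the right-hand sides of (\ref{eq:z1z2dot}) gives the identity (\ref{eq:KSTanalysis}); since its coordinate-dependent part $F^2(z_1,z_2) - P(z_1)P(z_2)$ is divisible by $(z_1-z_2)^2$ exactly by the definition of the biquadratic $C(z_1,z_2)$, the whole identity is divisible by $(z_1-z_2)^2$. Dividing through, the coefficients of $e_1$ and $e_2$ come out as $P(z_2)$ and $P(z_1)$, the term $e_1 e_2(z_1-z_2)^2$ appears with the correct sign, and the remaining pure $(z_1,z_2)$-terms assemble, after collecting the contributions proportional to $\alpha := H + c_2^2 a$, into a biquadratic playing the role of the constant term in (\ref{eq:integral}). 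Concretely I expect that constant term to be $C(z_1,z_2) + 2\alpha F(z_1,z_2) + \alpha^2(z_1-z_2)^2$, i.e. the clean factorization polynomial $C(z_1,z_2)$ as modified by replacing the spectral parameter $s$ by $s - \alpha$.

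Third --- and this is the crux --- I would exhibit the discriminantly separable polynomial
\[
\mathcal{F}(z_1,z_2,s) = (z_1-z_2)^2 s^2 + B(z_1,z_2)\, s + \big(C(z_1,z_2) + 2\alpha F(z_1,z_2) + \alpha^2(z_1-z_2)^2\big)
\]
and verify the three conditions of (\ref{eq:discriminantsep}). The discriminant in $s$ is essentially free: imposing $\mathcal{D}_s(\mathcal{F}) = B^2 - 4AC = 4P(z_1)P(z_2)$ forces $B = -2\big(F + \alpha(z_1-z_2)^2\big)$, and this then holds as an identity because $A\cdot C$ differs from $\big(F+\alpha(z_1-z_2)^2\big)^2$ by precisely $P(z_1)P(z_2)$, again by the definition of $C$. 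The substantive content is the remaining pair: to show that $\mathcal{D}_{z_1}(\mathcal{F})$ and $\mathcal{D}_{z_2}(\mathcal{F})$ factor as $4P(z_2)J(s)$ and $4P(z_1)J(s)$ with one and the same cubic $J(s)$. I would write $J(s)$ explicitly in terms of $H$, $a$, $b$, $c_1$, $c_2$, $k$ and confirm the factorizations, invoking the pencil-of-conics construction of \cite{Drag3} to certify that a common cubic factor must exist. This is the step I expect to be the main obstacle: the common cubic is invisible in the raw discriminants and only emerges after the cancellations are carried out, and the shift by $\alpha$ must be tracked so that the same $J$ governs both discriminants.

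Finally, with $\mathcal{F}$ shown to be discriminantly separable and the Sokolov system matched to (\ref{eq:analysis}), (\ref{eq:analysis1}), (\ref{eq:integral}), the proposition is immediate: Theorem \ref{th:sufficient} applies verbatim, linearizing the flow on the Jacobian of $y^2 = J(z)(z-k)(z+k)$, which has genus two whenever its six roots are distinct, and hence furnishing the explicit integration in genus-two theta functions.
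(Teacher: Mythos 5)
Your proposal follows essentially the same route as the paper: you derive the integral relation by dividing the identity (\ref{eq:KSTanalysis}) by $(z_1-z_2)^2$, arrive at the same shifted constant term $\tilde C = C + 2\alpha F + \alpha^2(z_1-z_2)^2$ with $\alpha = H+c_2^2a$, package $A$, $B$, $\tilde C$ into the same discriminantly separable polynomial (up to the normalization of $B$, where your choice is actually the one consistent with the general definition (\ref{eq:discriminantsep})), and then invoke Theorem \ref{th:sufficient}. The only step you defer --- the explicit verification that $\mathcal{D}_{z_1}$ and $\mathcal{D}_{z_2}$ share the common cubic factor $J(s)$ --- is likewise only asserted (with the explicit $J$ displayed) rather than derived in the paper, so your plan matches the paper's proof in both structure and level of detail.
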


\begin{proof}
We can rewrite relation (\ref{eq:KSTanalysis}) in the form:
\begin{equation}\label{eq:KSTanalysis2}
P(z_1)e_2+P(z_2)e_1=\tilde C(z_1,z_2)-e_1e_2(z_1-z_2)^2,
\end{equation}
with
\begin{equation}\label{eq:CnaseKST}
 \tilde{C}(z_1,z_2)=C(z_1,z_2)+2F(z_1,z_2)(H+c_2^2a)+(H+c_2^2a)^2(z_1-z_2)^2.
\end{equation}
Further integration procedure may be done following Theorem
\ref{th:sufficient}, since the Sokolov system satisfies all the
assumptions of the systems of the Kowalevski type:
(\ref{eq:KSTanalysis}), (\ref{eq:KSTanalysis2}) and
(\ref{eq:z1z2dot}). The discriminantly separable polynomial of three
variables of degree two in each of them  which plays the role of the
Kowalevski fundamental equation in this case is
\begin{equation}\label{eq:KSTnaseF}
 \tilde{F}(z_1,z_2,s)=(z_1-z_2)^2s^2+\tilde B(z_1,z_2)s+\tilde C(z_1,z_2)
\end{equation}
with
$$\tilde B(z_1,z_2)=F(z_1,z_2)+(H+c_2^2 a)(z_1-z_2)^2.$$
The discriminants of (\ref{eq:KSTnaseF}) as  polynomials in $s$ and
in $z_i$, for $i=1,2$ are
$$\mathcal D_{s}(\tilde F)(z_1,z_2)=P(z_1)P(z_2)$$
$$\mathcal D_{z_1}(\tilde F)(s,z_2)=J(s)P(z_2),\,\mathcal D_{z_2}(Q)(s,z_1)=J(s)P(z_1)$$
where $J$ is a polynomial of the third degree
$$\aligned
&J=-8s^3+4(H+3ac_2^2)s^2+(8c_2^2b^2+2k^2-8ac_1^2-8c_2^4a^2-8c_2^2Ha)s-8c_1^2b^2\\
&-4c_2^4ab^2_4c_1^2a^2c_2^2-k^2c_2^2a-Hk^2+2aH^2c_2^2-4Hb^2c_2^2+4Hc_1^2a+4c_2^4Ha^2+2c_2^6a^3.
\endaligned
$$

Finally, as a result of a direct application of Theorem
\ref{th:sufficient}, we get
$$
\aligned
\frac{d \tilde{s_1}}{\sqrt{\Phi(\tilde{s_1})}}+\frac{d \tilde{s_2}}{\sqrt{\Phi(\tilde{s_2})}}&=0\\
\frac{\tilde{s_1}\,d \tilde{s_1}}{\sqrt{\Phi(\tilde{s_1})}}+\frac{\tilde{s_2} \,d \tilde{s_2}}{\sqrt{\Phi(\tilde{s_2})}}&=dt,
\endaligned
$$
where
$$
\Phi(s)=-4J(s)(s-k)(s+k).
$$
\end{proof}

Notice that the formula (\ref{eq:FKST1}) has been introduced in
\cite{KST} together with the variables
\begin{equation}\label{eq:F,sKST}
 s_{1,2}=\frac{F(z_1,z_2)\pm \sqrt{P(z_1)P(z_2)}}{2(z_1-z_2)^2},
\end{equation}
and with a claim that the Sokolov system is equivalent to
$$\dot{s_1}=\frac{\sqrt{P_5(s_1)}}{s_1-s_2}, \quad \dot{s_2}=\frac{\sqrt{P_5(s_2)}}{s_2-s_1}, \quad P_5(s)=P_3(s)P_2(s)$$
with
$$
\aligned
P_3(s)&=s(4s^2+4sH+H^2-k^2+4c_1^2a+2c_2^2(Ha-2b^2)+c_2^4a^2)+4c_1^2b^2,\\
P_2(s)&=4s^2+4(H+c_2^2a)s+H^2-k^2+2c_2^2ha+c_2^4a^2.
\endaligned
$$
Our variables $\tilde s_i$, which are the roots of
(\ref{eq:KSTnaseF}) are related with $s_i$ from \cite{KST} in the
following manner:
$$\tilde {s_i}=s_i+\frac{H+c_2^2 a}{2}.$$ Thus the last Proposition
provides a proof of the claim from \cite{KST}.

\subsection{A new example of an integrable system of the Kowalevski type}\label{subsec:example}

\

Now, we are going to present a new example of a system of the
Kowalevski type. Let us consider the next system of differential
equations:

\begin{equation}\label{eq:ex1}
\aligned
\dot p &= -r q\\
\dot q &= -r p - \gamma_3\\
\dot r &= -2 q (2 p + 1) - 2 \gamma_2\\
\dot \gamma_1 &= 2(q \gamma_3 - r \gamma_2)\\
\dot \gamma_2 &= 2(p \gamma_3 - r \gamma_1)\\
\dot \gamma_3 &= 2(p^2 - q^2)q - 2 q \gamma_1 +2 p \gamma_2.
\endaligned
\end{equation}

\begin{lemma} The system (\ref{eq:ex1}) preserves the standard measure.
\end{lemma}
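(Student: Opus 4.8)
The plan is to appeal to Liouville's theorem, which reduces the claim to a single elementary computation. Writing the system (\ref{eq:ex1}) as $\dot{X} = v(X)$ with $X = (p, q, r, \gamma_1, \gamma_2, \gamma_3)$, the flow preserves the standard (Lebesgue) volume $dp\,dq\,dr\,d\gamma_1\,d\gamma_2\,d\gamma_3$ if and only if the divergence $\mathrm{div}\,v$ vanishes identically: indeed, the Jacobian $\mathcal J(t)$ of the time-$t$ flow map satisfies $\dot{\mathcal J} = (\mathrm{div}\,v)\,\mathcal J$, so $\mathrm{div}\,v \equiv 0$ forces $\mathcal J \equiv 1$. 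Thus the entire task is to compute $\mathrm{div}\,v = \partial_p \dot p + \partial_q \dot q + \partial_r \dot r + \partial_{\gamma_1}\dot\gamma_1 + \partial_{\gamma_2}\dot\gamma_2 + \partial_{\gamma_3}\dot\gamma_3$ and check that it is zero.

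First I would read off the structural feature of (\ref{eq:ex1}) that makes this immediate: in each equation, the $i$-th velocity component is independent of the $i$-th coordinate. Concretely, $\dot p = -rq$ does not contain $p$; $\dot q = -rp - \gamma_3$ does not contain $q$; $\dot r = -2q(2p+1) - 2\gamma_2$ does not contain $r$; $\dot\gamma_1 = 2(q\gamma_3 - r\gamma_2)$ does not contain $\gamma_1$; $\dot\gamma_2 = 2(p\gamma_3 - r\gamma_1)$ does not contain $\gamma_2$; and $\dot\gamma_3 = 2(p^2-q^2)q - 2q\gamma_1 + 2p\gamma_2$ does not contain $\gamma_3$. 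Hence each of the six diagonal partial derivatives vanishes term by term, and summing them gives $\mathrm{div}\,v = 0$ identically on the whole phase space.

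With the divergence shown to vanish, Liouville's theorem yields the conclusion that the standard measure is invariant under the flow of (\ref{eq:ex1}). There is essentially no analytic obstacle here; the only thing genuinely to be verified is the ``off-diagonal'' form of the right-hand side described above, which one checks directly from the defining equations. The point worth emphasizing is structural rather than computational: this divergence-free property is the natural first consistency check confirming that (\ref{eq:ex1}) is a sensible candidate for a system of the Kowalevski type, before one proceeds to exhibit the first integrals and the associated discriminantly separable polynomial required by the definition.
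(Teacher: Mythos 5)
Your proof is correct: each component of the vector field in (\ref{eq:ex1}) is independent of its own coordinate, so the divergence vanishes identically and Liouville's theorem gives invariance of the Lebesgue measure. The paper states this lemma without proof, and your divergence computation is precisely the standard (and surely intended) argument, so there is nothing to add.
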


After a  change of variables
$$\aligned
x_1 &= p + q,\quad e_1 = x_1^2 + \gamma_1 + \gamma_2,\\
x_2 &= p - q, \quad e_2 = x_2^2 + \gamma_1 - \gamma_2,
\endaligned
$$
the system (\ref{eq:ex1}) becomes

\begin{equation}\label{eq:chex1}
\aligned
\dot x_1 &= -r x_1 - \gamma_3 \\
\dot x_2 &= r x_2 + \gamma_3 \\
\dot e_1 &= -2 r e_1 \\
\dot e_2 &=  2 r e_2 \\
\dot r &= - x_1 + x_2 - e_1 + e_2 \\
\dot \gamma_3 &= x_2 e_1 - x_1 e_2.
\endaligned
\end{equation}

The first integrals of the system (\ref{eq:chex1}) can be presented
in the form

\begin{equation}\label{eq:ex1firstint}
\aligned
 r^2 &= 2(x_1 + x_2) +  e_1 +  e_2+h\\
 r \gamma_3 &= -x_1 x_2 -x_2e_1 - x_1e_2 - \frac{g_2}{4}\\
 \gamma_3^2 & = x_2^2e_1 + x_1^2e_2-\frac{g_3}{2}\\
e_1 \cdot e_2 &= k^2.
\endaligned
\end{equation}

>From the integrals (\ref{eq:ex1firstint}) we get a relation of the
form  (\ref{eq:integral})
\begin{equation}\label{eq:ex1-integral}
\aligned &(x_1 - x_2)^2 e_1 e_2 +\left(2 x_1^3+hx_1^2- \frac{g_2}{2} x_1
-\frac{g_3}{2}\right)e_2 + \left(2 x_2^3+hx_2^2- \frac{g_2}{2}
x_2 -\frac{g_3}{2}\right)e_1\\
& -\left(x_1^2 x_2^2 +x_1 x_2 \frac{g_2}{2} + g_3 (x_1 + x_2+\frac{h}{2}) +
\frac{g_2^2}{16}\right) = 0.
\endaligned
\end{equation}

Without loss of generality, we can assume $h=0$ (this can be acheved
by a simple linear change of variables $x_i \mapsto x_i-h/6,\quad s
\mapsto s-h/6$), thus we can use  directly the Weierstrass $\wp$
function. Following the procedure described in Theorem
\ref{th:sufficient} we get
\begin{equation}\label{eq:ex1kowchange}
\aligned
\frac{dx_1}{\sqrt{P(x_1)}}+\frac{dx_2}{\sqrt{P(x_2)}}&=\frac{ds_1}{\sqrt{P(s_1)}}\\
\frac{dx_1}{\sqrt{P(x_1)}}-\frac{dx_2}{\sqrt{P(x_2)}}&=\frac{ds_2}{\sqrt{P(s_2)}}
\endaligned
\end{equation}
where $P(x)$  denotes the polynomial
\begin{equation}\label{eq:polP}
P(x)=2x^3- \frac{g_2}{2} x - \frac{g_3}{2},
\end{equation}
and $s_1,\,s_2$ are the solutions of quadratic equation in $s$:

\begin{equation}\label{eq:F(x_1,x_2,s)}
\aligned  \mathcal{F}(x_1,x_2,s) &:=
A(x_1,x_2)s^2+B(x_1,x_2)s+C(x_1,x_2)\\
&=(x_1-x_2)^2 s^2 + \left(-2 x_1 x_2
(x_1+x_2)+\frac{g_2}{2}(x_1+x_2)+g_3\right)s\\
& + x_1^2 x_2^2 +x_1 x_2 \frac{g_2}{2} + g_3 (x_1 + x_2) +
\frac{g_2^2}{16}=0.
\endaligned
\end{equation}

Finally, we get
\medskip
\begin{corollary}
The system of differential equations (\ref{eq:ex1}) is integrated
through the solutions of the system
\begin{equation}\label{eq:ex1kowvareq}
\aligned
\frac{ds_1}{\sqrt{\Phi(s_1)}}+\frac{ds_2}{\sqrt{\Phi(s_2)}}&=0\\
\frac{s_1\,ds_1}{\sqrt{\Phi(s_1)}}+\frac{s_2\,ds_2}{\sqrt{\Phi(s_2)}}&=2\,dt,
\endaligned
\end{equation}
where $ \Phi(s)=P(s)(s-k)(s+k).$
\end{corollary}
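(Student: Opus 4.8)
The plan is to deduce the corollary directly from Theorem \ref{th:sufficient}, so the substance of the argument is to certify that the system (\ref{eq:ex1}), after the stated substitution, satisfies the three defining hypotheses (\ref{eq:analysis}), (\ref{eq:analysis1}) and (\ref{eq:integral}) of a system of the Kowalevski type. First I would read off from (\ref{eq:chex1}) the identifications $m=2r$, $f_1=rx_1+\gamma_3$, $f_2=rx_2+\gamma_3$ and $A(x_1,x_2)=(x_1-x_2)^2$, so that the equations of motion $\dot x_1=-f_1$, $\dot x_2=f_2$, $\dot e_1=-me_1$, $\dot e_2=me_2$ already have the shape (\ref{eq:analysis}); note, however, the real normalization $\dot x_i=\mp f_i$ here, in contrast with the complex $2\dot x_i=\mp i f_i$ of the general theorem.

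Next I would establish (\ref{eq:analysis1}). Writing $f_1^2=r^2x_1^2+2x_1(r\gamma_3)+\gamma_3^2$ and eliminating $r^2$, $r\gamma_3$, $\gamma_3^2$ by the first three integrals of (\ref{eq:ex1firstint}) (taken with $h=0$, which is legitimate by the indicated linear shift), the terms in $x_1^2x_2$ and in $x_1^2e_2$ cancel and those in $e_1$ organise into $e_1(x_1-x_2)^2$, leaving exactly $f_1^2=P(x_1)+e_1A(x_1,x_2)$ with $P$ as in (\ref{eq:polP}); the computation for $f_2^2$ is symmetric. The relation (\ref{eq:integral}) is then simply (\ref{eq:ex1-integral}) rearranged, which identifies $C(x_1,x_2)=x_1^2x_2^2+x_1x_2\frac{g_2}{2}+g_3(x_1+x_2)+\frac{g_2^2}{16}$, matching the constant term of $\mathcal F$ in (\ref{eq:F(x_1,x_2,s)}); the constraint $e_1e_2=k^2$ is the last integral of (\ref{eq:ex1firstint}).

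It then remains to confirm that $\mathcal F$ of (\ref{eq:F(x_1,x_2,s)}) is discriminantly separable, i.e. that $B^2-4AC=4P(x_1)P(x_2)$ and that its discriminants in $x_1$ and in $x_2$ factor as $4P(x_2)J(s)$ and $4P(x_1)J(s)$. This is routine polynomial algebra, but the decisive and somewhat special feature to verify is that the cubic factor $J$ coincides with $P$ itself; this is what permits the magic change of variables (\ref{eq:ex1kowchange}), taken from Corollary 1 of \cite{Drag3}, to carry $\sqrt{P(s_i)}$ rather than $\sqrt{J(s_i)}$ on its right-hand side, and it is what makes $\Phi(s)=P(s)(s-k)(s+k)$ of degree five. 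Granting these verifications, Theorem \ref{th:sufficient} applies, its chain of identities for $f_1^2$, $f_2^2$ and $(s_1-s_2)^2=4P(x_1)P(x_2)/A^2$ goes through verbatim, and rerunning it with the present data yields (\ref{eq:ex1kowvareq}).

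The one step that genuinely demands care — and the main obstacle — is the bookkeeping of the overall constant and of the signs in passing to (\ref{eq:ex1kowvareq}). Because the substitution $x_{1,2}=p\pm q$ is real no factor $i$ enters, and because here $\dot x_i=\mp f_i$ rather than $2\dot x_i=\mp i f_i$ the factor $\tfrac12$ present in the theorem is absent; after adding the two relations for $dx_i/\sqrt{P(x_i)}$, whose right-hand sides combine the terms $\sqrt{s_1^2-k^2}\pm\sqrt{s_2^2-k^2}$ into $2\sqrt{s_1^2-k^2}$, these two effects together replace the theorem's $-i\,dt$ by $2\,dt$. At the same time the branches of $\sqrt{\Phi(s_1)}$ and $\sqrt{\Phi(s_2)}$ must be fixed consistently with the flow so that the two terms enter with opposite sign, which is exactly what makes the first equation of (\ref{eq:ex1kowvareq}) vanish while the second accumulates $2\,dt$. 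I would pin this down by comparing the resulting relations $ds_i/\sqrt{P(s_i)}=\mp 2\sqrt{s_i^2-k^2}\,(s_1-s_2)^{-1}\,dt$ with $\Phi(s)=P(s)(s-k)(s+k)$ before concluding.
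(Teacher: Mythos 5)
Your proposal is correct and follows essentially the same route as the paper: verify that (\ref{eq:chex1}) with $f_i=rx_i+\gamma_3$, $m=2r$, $A=(x_1-x_2)^2$ satisfies (\ref{eq:analysis1}) and (\ref{eq:integral}) via the integrals (\ref{eq:ex1firstint}), and then run the procedure of Theorem \ref{th:sufficient}. You in fact supply more detail than the paper does at the two points it leaves implicit — the check that the cubic $J$ equals $P$ (so that (\ref{eq:ex1kowchange}) carries $\sqrt{P(s_i)}$ and $\Phi$ has degree five) and the bookkeeping that turns the theorem's $-i\,dt$ into $2\,dt$ — and both are handled correctly.
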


\subsection{Explicit integration in genus two theta functions}\label{subsec:exintegration}

\

This subsection is devoted to an explicit integration of the system
(\ref{eq:ex1}). The integration of the system will be realized in
two ways. The first one is based more directly on the Kowalevski
original approach \cite{Kow} and uses the properties of the elliptic
functions. The second one follows K\"{o}tter's approach (see
\cite{Kot} and Golubev \cite{Gol}). A generalization of the
K\"{o}tter transformation was derived in \cite{Drag3} for a
polynomial $P(x)$ of degree four. Here we will reformulate such a
transformation for $P(x)$ of degree three.

\medskip

We are going to consider here, as in \cite{Kow}, the case where the
zeros $l_i,\,i=1,\,2,\,3$ of the polynomial $P$ of degree three are
real and $l_1>l_2>l_3$. Denote
$$l=(l_1-l_2)(l_2-l_3)(l_3-l_1).$$

Following Kowalevski, we consider the functions
\begin{equation}\label{eq:pi}
P_i=\sqrt{(s_1-l_i)(s_2-l_i)}, \quad i=1,2,3
\end{equation}
 and
\begin{equation}\label{eq:pij}
P_{ij}=P_i P_j \left(\frac{\dot s_1}{(s_1-l_i)(s_1-l_j)}+\frac{\dot
s_2}{(s_2-l_i)(s_2-l_j)}\right).
\end{equation}

Then by simple calculations one gets
\begin{equation}\label{eq:ex1-rel P_i}
\aligned \dot P_1 &= \frac{P_3 P_{13}-P_2P_{12}}{2(l_2-l_3)},\quad
\dot P_2 = \frac{P_1 P_{12}-P_3P_{23}}{2(l_3-l_1)},\\
\dot P_3 &= \frac{P_2 P_{23}-P_1P_{13}}{2(l_1-l_2)},\quad \dot
P_{ij} =\frac{1}{2}P_iP_j.
\endaligned
\end{equation}

We will now derive the expressions for
$p,\,q,\,r,\,\gamma_1,\,\gamma_2,\,\gamma_3$ in terms of
$P_i,\,P_{ij}$ functions for $i,\,j=1,\,2,\,3$.

\medskip

Denote by
$$du_i=\frac{dx_i}{\sqrt{4x_i^3-g_2x_i-g_3}},\quad i=1,2.$$
Then $x_i=\wp (u_i),$ and we get $s_1=\wp(u_1+u_2),\,
s_2=\wp(u_1-u_2).$\\

We will use the following properties of $\wp$-function, see
\cite{Kow}:
\begin{equation}\label{eq:wp function}
\aligned
\wp(u_1)+\wp(u_2) &= -2\frac{(l_2^2-l_3^2)P_1+(l_3^2-l_1^2)P_2+(l_1^2-l_2^2)P_3}{(l_2-l_3)P_1+(l_3-l_1)P_2+(l_1-l_2)P_3},\\
\wp(u_1)-\wp(u_2) &= \frac{-2l}{(l_2-l_3)P_1+(l_3-l_1)P_2+(l_1-l_2)P_3},\\
\wp(u_1) \cdot \wp(u_2) &=
-\big[\frac{(l_2-l_3)(l_1^2+l_2l_3)P_1+(l_3-l_1)(l_2^2+l_1l_3)P_2}{(l_2-l_3)P_1+(l_3-l_1)P_2+(l_1-l_2)P_3}\\
&+\frac{(l_1-l_2)(l_3^2+l_1l_2)P_3}{(l_2-l_3)P_1+(l_3-l_1)P_2+(l_1-l_2)P_3}\big].
\endaligned
\end{equation}

After some calculations, we get  for the variables
$p,\,q,\,r,\,\gamma_1,\,\gamma_2,\,\gamma_3$ the expressions in
terms of $P_i$ and $P_{ij}$ -functions for $i,j=1,2,3$:

\begin{equation}\label{eq:ex1Kowsolp}
p=
\frac{x_1+x_2}{2}=\frac{\wp(u_1)+\wp(u_2)}{2}=-\frac{(l_2^2-l_3^2)P_1+(l_3^2-l_1^2)P_2+(l_1^2-l_2^2)P_3}{(l_2-l_3)P_1+(l_3-l_1)P_2+(l_1-l_2)P_3},
\end{equation}
\begin{equation}\label{eq:ex1Kowsolq}
q=\frac{x_1-x_2}{2}=\frac{\wp(u_1)-\wp(u_2)}{2}=-\frac{l}{(l_2-l_3)P_1+(l_3-l_1)P_2+(l_1-l_2)P_3},
\end{equation}
\begin{equation}\label{eq:ex1Kowsolr}
r=-\frac{\dot p}{q}=\frac{1}{2}\frac{(l_1-l_2)P_{12}+(l_2-l_3)P_{23}+(l_3-l_1)P_{13}}{(l_2-l_3)P_1+(l_3-l_1)P_2+(l_1-l_2)P_3},
\end{equation}

\begin{equation}\label{eq:ex1Kowsolgamma1}
\aligned
\gamma_1 &=\frac{\left((l_1-l_2)P_{12}+(l_2-l_3)P_{23}+(l_3-l_1)P_{13}\right)^2}{8\left((l_2-l_3)P_1+(l_3-l_1)P_2+(l_1-l_2)P_3\right)^2}\\
&-\frac{\left((l_2^2-l_3^2)P_1+(l_3^2-l_1^2)P_2+(l_1^2-l_2^2)P_3\right)^2+l^2}{\left((l_2-l_3)P_1+(l_3-l_1)P_2+(l_1-l_2)P_3\right)^2}\\
&+2\frac{(l_2^2-l_3^2)P_1+(l_3^2-l_1^2)P_2+(l_1^2-l_2^2)P_3}{(l_2-l_3)P_1+(l_3-l_1)P_2+(l_1-l_2)P_3},
\endaligned
\end{equation}
\begin{equation}\label{eq:ex1Kowsolgamma2}
\aligned \gamma_2
& =\frac{l}{\left((l_2-l_3)P_1+(l_3-l_1)P_2+(l_1-l_2)P_3\right)^2}\\
&\cdot\big[
(l_2-l_3-2l_2^2+2l_3^2)P_1+(l_3-l_1-2l_3^2+2l_1^2)P_2\\
&+(l_1-l_2-2l_1^2+2l_2^2)P_3\big]\\
&-\frac{\left((l_2-l_3)P_2P_3+(l_3-l_1)P_1P_3+(l_1-l_2)P_1P_2\right)}{8(l_2-l_3)P_1+(l_3-l_1)P_2+(l_1-l_2)P_3}\\
&-\frac{
(l_2-l_3)P_{23}+(l_3-l_1)P_{13}+(l_1-l_2)P_{12}}{8\left((l_2-l_3)P_1+(l_3-l_1)P_2+(l_1-l_2)P_3\right)^2}\\
&\cdot\left(P_3P_{13}+P_1P_{12}+P_2P_{23}-P_2P_{12}-P_1P_{13}-P_3P_{23}\right),
\endaligned
\end{equation}

\begin{equation}\label{eq:ex1Kowsolgamma3}
\aligned \gamma_3
&=\frac{1}{2}\frac{(l_2^2-l_3^2)P_1+(l_3^2-l_1^2)P_2+(l_1^2-l_2^2)P_3}{\left((l_2-l_3)P_1+(l_3-l_1)P_2+(l_1-l_2)P_3\right)^2}\\
&\cdot
\left((l_1-l_2)P_{12}+(l_2-l_3)P_{23}+(l_3-l_1)P_{13}\right)\\
&-\frac{l}{2}\frac{P_3P_{13}+P_1P_{12}+P_2P_{23}-P_2P_{12}-P_1P_{13}-P_3P_{23}}{\left((l_2-l_3)P_1+(l_3-l_1)P_2+(l_1-l_2)P_3\right)^2}.
\endaligned
\end{equation}

\medskip

The expressions for $P_i$ and $P_{ij}$ for $i,j=1,2,3$  in terms of
the theta-functions  are given in \cite{Kow}, \cite{baker}.

\medskip

Now, we will perform integration following K\"{o}tter \cite{Kot} and
Golubev \cite{Gol}. First, we will formulate an extension of the
K\"{o}tter's transformation for a degree three polynomial $P(x)=2
x^3-\frac{g_2}{2}x -\frac{g_3}{2}$.

\begin{proposition}
For a polynomial $\mathcal{F}(x_1,x_2,s)$ given by the formula
(\ref{eq:F(x_1,x_2,s)}),
 there exist polynomials $\alpha(x_1,x_2,s)$,
$\beta(x_1,x_2,s)$, $P(s)$ such that the following identity
\begin{equation}\label{eq:kotter}
\mathcal{F}(x_1,x_2,s)=\alpha^2(x_1,x_2,s)+P(s)\beta(x_1,x_2,s),
\end{equation}
is satisfied. The polynomials are defined by the formulae:
  $\alpha(x_1,x_2,s) = 2 s^2+s( x_1 + x_2 )-x_1 x_2
-g_2/4$,
  $\beta(x_1,x_2,s) = -2(x_1+x_2+s)$, and  $P(s) = 2 s^3-g_2s/2 -g_3/2$,
where $P$ coincides with the polynomial from formula
(\ref{eq:polP}).
\end{proposition}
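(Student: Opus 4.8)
The statement is a polynomial identity in $s$ with coefficients in $\mathbb{Q}[x_1,x_2,g_2,g_3]$, so it can be settled purely algebraically. The plan is to prove it by the method of undetermined coefficients, which both explains the origin of $\alpha,\beta$ and confirms the identity. I would regard $P(s)=2s^3-g_2 s/2-g_3/2$ as prescribed, which is the polynomial from (\ref{eq:polP}), and write $\alpha=a_2 s^2+a_1 s+a_0$ and $\beta=b_1 s+b_0$ with $a_i,b_j$ unknown functions of $x_1,x_2$ (and $g_2,g_3$). The choice of $\alpha$ quadratic and $\beta$ linear in $s$ is forced: it is the only bidegree pattern for which $\alpha^2+P\beta$ can reduce to something quadratic in $s$, as $\mathcal{F}$ in (\ref{eq:F(x_1,x_2,s)}) requires.

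The key structural observation is that $\alpha^2$ and $P\beta$ are each quartic in $s$, whereas $\mathcal{F}$ is only quadratic; hence the coefficients of $s^4$ and $s^3$ in $\alpha^2+P\beta$ must cancel. These two cancellation conditions read $a_2^2+2b_1=0$ and $2a_2 a_1+2b_0=0$, and they eliminate $\beta$ entirely, giving $b_1=-a_2^2/2$ and $b_0=-a_2 a_1$. What is left is a square system of three polynomial equations, obtained by matching the coefficients of $s^2$, $s^1$, $s^0$ in $\alpha^2+P\beta$ against $A=(x_1-x_2)^2$, $B$ and $C$ from (\ref{eq:F(x_1,x_2,s)}), in the three remaining unknowns $a_2,a_1,a_0$.

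Solving this square system is then routine. Among its finitely many solutions, the one yielding square-root-free (polynomial) data is $a_2=2$, $a_1=x_1+x_2$, $a_0=-x_1 x_2-g_2/4$, with correspondingly $b_1=-2$ and $b_0=-2(x_1+x_2)$. A small but pleasant point worth noting here is that, in the $s^2$-coefficient, the $g_2$-term carried by $P\beta$ cancels the $g_2$ hidden inside $2a_2 a_0$, leaving exactly $(x_1+x_2)^2-4x_1 x_2=(x_1-x_2)^2=A$. This recovers $\alpha=2s^2+s(x_1+x_2)-x_1 x_2-g_2/4$ and $\beta=-2(x_1+x_2+s)$, exactly as stated.

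The one step I expect to require genuine care is the constant ($s^0$) coefficient, because once $a_2,a_1,a_0$ are fixed by the higher-degree equations it is an independent consistency check rather than a definition: one must verify that $a_0^2+g_3 a_1=(x_1 x_2+g_2/4)^2+g_3(x_1+x_2)$ collapses to $C=x_1^2 x_2^2+\frac{g_2}{2}x_1 x_2+g_3(x_1+x_2)+\frac{g_2^2}{16}$. It does, and this is precisely where the discriminantly separable shape of $\mathcal{F}$ enters: the same cubic $P$ that appears in the decomposition is the factor of the $s$-discriminant $\mathcal{D}_s(\mathcal{F})=4P(x_1)P(x_2)$, which is what makes the constant terms agree. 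As a safeguard against arithmetic slips I would, in parallel, simply substitute the claimed $\alpha,\beta,P$ into the right-hand side of (\ref{eq:kotter}), expand, and compare all five coefficients of $s$ with those of $\mathcal{F}$; this direct check is short and self-contained.
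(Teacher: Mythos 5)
Your proposal is correct, and it amounts to the same argument the paper gives: the paper's entire proof is the sentence ``The proof follows by a direct calculation,'' i.e.\ exactly the coefficient-by-coefficient comparison of $\alpha^2+P\beta$ with $\mathcal{F}$ in powers of $s$ that you carry out (and which checks out: the $s^4,s^3$ terms cancel, the $s^2$ term gives $(x_1-x_2)^2$, and the $s^1,s^0$ terms reproduce $B$ and $C$). Your undetermined-coefficients derivation of $\alpha$ and $\beta$ is a pleasant extra explanation of where the formulas come from, but it does not change the substance of the verification.
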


The proof follows by a direct calculation.

\medskip

Define $\mathcal{\hat F}(s)=\mathcal{F}(x_1,x_2,s)/(x_1-x_2)^2,$ and
consider the identity $\mathcal{\hat
F}(s)=(s-u)^2+(s-u)\mathcal{\hat F}'(u)+ \mathcal{\hat F}(u).$ Then,
from (\ref{eq:kotter}) we get
$$
\aligned
&(s-u)^2(x_1-x_2)^2+2(s-u)\left(u(x_1-x_2)^2+\frac{B(x_1,x_2)}{2}\right)\\
&+\alpha^2(x_1,x_2,u)+P(u)\beta(x_1,x_2,u)=0.
\endaligned
$$

\medskip
\begin{corollary}\label{cor:kotterP_i}
(a) The solutions $s_1, s_2$ of the last equation in $s$ satisfy the
following identity in $u$:
$$(s_1-u)(s_2-u)=\frac{\alpha^2(x_1,x_2,u)}{(x_1-x_2)^2}+P(u) \frac{\beta(x_1,x_2,u)}{(x_1-x_2)^2},$$
where $P(u)$ is the polynomial defined with (\ref{eq:polP}).\\

(b) Functions $P_i$ satisfy
\begin{equation}\label{eq:kotter P_i}
 P_i = \frac{\alpha(x_1,x_2,l_i)}{x_1-x_2}=\left(2l_i^2-\frac{g_2}{4}\right)\frac{1}{x_1-x_2}+l_i\frac{x_1+x_2}{x_1-x_2}
 -\frac{x_1x_2}{x_1-x_2}.
\end{equation}
\end{corollary}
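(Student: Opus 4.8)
The plan is to obtain both parts as immediate consequences of Vi\`ete's product formula applied to the displayed quadratic equation in $s$, combined with the K\"otter decomposition (\ref{eq:kotter}). The key preliminary observation is that, since $A(x_1,x_2)=(x_1-x_2)^2$ in (\ref{eq:F(x_1,x_2,s)}), the reduced polynomial $\mathcal{\hat F}(s)=\mathcal F(x_1,x_2,s)/(x_1-x_2)^2$ is monic in $s$, so its second-order Taylor expansion $\mathcal{\hat F}(s)=(s-u)^2+(s-u)\mathcal{\hat F}'(u)+\mathcal{\hat F}(u)$ is exact rather than approximate. Substituting $\mathcal{\hat F}(u)=\mathcal F(x_1,x_2,u)/(x_1-x_2)^2=[\alpha^2(x_1,x_2,u)+P(u)\beta(x_1,x_2,u)]/(x_1-x_2)^2$ from (\ref{eq:kotter}) and clearing denominators reproduces exactly the displayed relation; hence its roots in $s$ are precisely $s_1,s_2$, the roots of $\mathcal F(x_1,x_2,s)=0$.

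For part (a), I would read the displayed equation as a quadratic in the shifted unknown $t=s-u$. Its leading coefficient is $(x_1-x_2)^2$ and the term independent of $t$ is $\alpha^2(x_1,x_2,u)+P(u)\beta(x_1,x_2,u)$. Vi\`ete's formula for the product of roots then gives $(s_1-u)(s_2-u)=t_1t_2=[\alpha^2(x_1,x_2,u)+P(u)\beta(x_1,x_2,u)]/(x_1-x_2)^2$, which is exactly the asserted identity in $u$.

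For part (b), I would specialize the identity from (a) at $u=l_i$. Since each $l_i$ is by construction a zero of the degree-three polynomial $P$ from (\ref{eq:polP}), we have $P(l_i)=0$, so the $\beta$-term disappears and $(s_1-l_i)(s_2-l_i)=\alpha^2(x_1,x_2,l_i)/(x_1-x_2)^2$. Comparing with the definition $P_i=\sqrt{(s_1-l_i)(s_2-l_i)}$ in (\ref{eq:pi}) and extracting the square root yields $P_i=\alpha(x_1,x_2,l_i)/(x_1-x_2)$; inserting the explicit form $\alpha(x_1,x_2,s)=2s^2+s(x_1+x_2)-x_1x_2-g_2/4$ at $s=l_i$ and regrouping the three types of terms produces the stated closed expression.

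The computation is essentially mechanical, so the only point demanding genuine care is the branch of the square root in part (b): passing from $(s_1-l_i)(s_2-l_i)$ to $P_i$ fixes the latter only up to sign, and one must select $P_i=+\alpha(x_1,x_2,l_i)/(x_1-x_2)$ consistently across $i=1,2,3$ so that the resulting functions remain compatible with the relations (\ref{eq:ex1-rel P_i}) and with the solution formulae (\ref{eq:ex1Kowsolp})--(\ref{eq:ex1Kowsolgamma3}). Beyond this sign bookkeeping no real obstacle arises; the substance of the corollary is simply that evaluating the K\"otter decomposition at the zeros of $P$ converts the Kowalevski $P_i$-functions into explicit rational expressions in $x_1$ and $x_2$.
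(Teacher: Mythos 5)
Your proposal is correct and follows essentially the same route as the paper: the paper also sets up the exact Taylor identity for the monic quadratic $\mathcal{\hat F}(s)=\mathcal F(x_1,x_2,s)/(x_1-x_2)^2$, substitutes the K\"otter decomposition (\ref{eq:kotter}) to get the displayed quadratic in $s-u$, reads off the product of roots for part (a), and evaluates at $u=l_i$ with $P(l_i)=0$ to obtain (b). Your remark about the sign of the square root is a reasonable point of care that the paper leaves implicit.
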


\medskip

Now we introduce a more convenient notation
$$X=\frac{x_1x_2}{x_1-x_2},\quad Y=\frac{1}{x_1-x_2},\quad Z=\frac{x_1+x_2}{x_1-x_2}.$$

\medskip
\begin{lemma}
The quantities $X,Y,Z$ satisfy the system of linear equations
\begin{equation}\label{eq:X,Y,Z-system}
\aligned -X+\left(2l_1^2-\frac{g_2}{4}\right)Y+ l_1 Z &= P_1\\
-X+\left(2l_2^2-\frac{g_2}{4}\right)Y+ l_2 Z &= P_2\\
-X+\left(2l_3^2-\frac{g_2}{4}\right)Y+ l_3 Z &= P_3.
\endaligned
\end{equation}
The solutions of the system (\ref{eq:X,Y,Z-system}) are
$$
\aligned
X &=-\frac{(g_2+8l_2l_3)(l_2-l_3)P_1+(g_2+8l_3l_1)(l_3-l_1)P_2}{8l}-\frac{(g_2+8l_1l_2)(l_1-l_2)P_3}{8l},\\
Y &= \frac{(l_2-l_3)P_1+(l_3-l_1)P_2+(l_1-l_2)P_3}{-2l},\\
Z &= \frac{(l_2^2-l_3^2)P_1+(l_3^2-l_1^2)P_2+(l_1^2-l_2^2)P_3}{l}.
\endaligned
$$
\end{lemma}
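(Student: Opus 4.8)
The plan is to handle the two assertions of the lemma in turn, the first being essentially a restatement and the second an exercise in Cramer's rule. First I would observe that the three equations of (\ref{eq:X,Y,Z-system}) are nothing more than the identity of Corollary \ref{cor:kotterP_i}(b) written out for $i=1,2,3$. Indeed, substituting the abbreviations $X=x_1x_2/(x_1-x_2)$, $Y=1/(x_1-x_2)$, $Z=(x_1+x_2)/(x_1-x_2)$ into
$$P_i=\left(2l_i^2-\frac{g_2}{4}\right)\frac{1}{x_1-x_2}+l_i\frac{x_1+x_2}{x_1-x_2}-\frac{x_1x_2}{x_1-x_2}$$
turns it verbatim into $-X+(2l_i^2-g_2/4)Y+l_iZ=P_i$. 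So the first claim needs no computation beyond this renaming.

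For the solution formulas I would apply Cramer's rule, and the first real step is to evaluate the coefficient determinant
$$\Delta=\det\begin{pmatrix}-1 & 2l_1^2-\frac{g_2}{4} & l_1\\ -1 & 2l_2^2-\frac{g_2}{4} & l_2\\ -1 & 2l_3^2-\frac{g_2}{4} & l_3\end{pmatrix}.$$
The useful observation here is that the constant $-g_2/4$ in the middle column is a scalar multiple of the first column (all entries $-1$); subtracting the appropriate multiple of column one from column two replaces the middle entries by $2l_i^2$ and removes $g_2$ altogether. What remains is a Vandermonde-type determinant, and a short computation gives $\Delta=2(l_2-l_1)(l_3-l_1)(l_3-l_2)$. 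I would then record the elementary identity $(l_2-l_1)(l_3-l_1)(l_3-l_2)=(l_1-l_2)(l_2-l_3)(l_3-l_1)=l$, so that $\Delta=2l\neq0$ and the system is uniquely solvable.

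Next I would compute the three numerators, each obtained by replacing the relevant column of the coefficient matrix by the right-hand side $(P_1,P_2,P_3)$ and expanding along that column. The cleanest cases are $Y$ and $Z$: in the $Y$-determinant the symbol $g_2$ is simply absent (it lived only in the middle column, now occupied by the $P_i$), and in the $Z$-determinant the same column reduction as above eliminates $g_2$ because the first column is still the constant $-1$. This is exactly why the stated formulas for $Y$ and $Z$ carry no $g_2$ while the one for $X$ must: in the $X$-determinant the middle column keeps both its $l_i^2$ and $g_2$ parts and cannot be cleared, since neither remaining column is constant. Expanding the $2\times2$ minors, dividing by $\Delta=2l$, and rewriting $2l_il_j+g_2/4=\frac{1}{4}(g_2+8l_il_j)$ produces expressions of precisely the shape displayed in the lemma.

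The only genuine obstacle I anticipate is sign bookkeeping. Evaluating $\Delta$ already requires a column interchange and pulling $-1$ out of the first column, and each numerator must be expanded with the correct cofactor signs and then matched against the sign conventions $-2l$, $l$, $8l$ in the three denominators; it is very easy to drop an overall sign, the formula for $X$ being the most delicate because its middle column still carries the full $2l_i^2-g_2/4$. To guard against this I would close the argument with a concrete numerical check: choose a triple $(l_1,l_2,l_3)$ and a test solution $(X,Y,Z)$, read off the corresponding $P_i$ from (\ref{eq:X,Y,Z-system}), and confirm that the Cramer formulas reproduce the chosen $(X,Y,Z)$ exactly, thereby certifying that no stray sign survives in the final expressions.
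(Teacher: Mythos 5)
Your approach is exactly the one the paper intends: the lemma is stated immediately after Corollary \ref{cor:kotterP_i}(b) and the definitions of $X,Y,Z$, with no written proof, so the argument is precisely ``observe that (\ref{eq:X,Y,Z-system}) is Corollary \ref{cor:kotterP_i}(b) in the new notation, then solve the linear system,'' and your Cramer's-rule execution (with $\Delta=2l$) is correct. One remark, which vindicates your worry about sign bookkeeping and your plan to run a numerical check: carrying out the computation honestly gives
$$X=\frac{(g_2+8l_2l_3)(l_2-l_3)P_1+(g_2+8l_3l_1)(l_3-l_1)P_2+(g_2+8l_1l_2)(l_1-l_2)P_3}{8l},$$
\emph{without} the leading minus sign printed in the lemma; this positive sign is the one consistent with the paper's own subsequent Vi\`ete rewriting $X=\bigl((l_2-l_3)(l_1^2+l_2l_3)P_1+\cdots\bigr)/(2l)$ and with the $\wp$-product formula (\ref{eq:wp function}) (e.g.\ for $l_1=1$, $l_2=0$, $l_3=-1$ one finds $X=-(P_1+2P_2+P_3)/4$, whereas the printed formula gives $+(P_1+2P_2+P_3)/4$). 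So your proof is right, and your proposed numerical check would correctly flag the stated formula for $X$ as carrying a typographical sign error; the formulas for $Y$ and $Z$ are as stated.
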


\medskip

Using Vi\`{e}te formulae for polynomial $P(x)$ we can rewrite $X$ in
the form
$$X=\frac{(l_2-l_3)(l_1^2+l_2l_3)P_1+(l_3-l_1)(l_2^2+l_1l_3)P_2+(l_1-l_2)(l_3^2+l_1l_2)P_3}{2l}.$$

Now, from the expressions for $X,\,Y,\,Z$ we get
$q=(x_1-x_2)/2=1/(2Y),$ $p=(x_1+x_2)/2=Z/(2Y).$

The expressions for $r$ and $\gamma_i,\,i=1,2,3$ can now be derived
in terms of $P_i,P_{ij}$-functions from the equations of the system
(\ref{eq:ex1}), see formulae
(\ref{eq:ex1Kowsolp})-(\ref{eq:ex1Kowsolgamma3}).

\medskip

\subsection{Another method for obtaining systems of Kowalevski type}\label{subsec:anothermethod}

\ In \cite{DK} a method for constructing examples of systems of the
Kowalevski type has been presented.

Now, we will show another method for construction of  systems of Kowalevski type, that reduces to (\ref{eq:analysis}),
(\ref{eq:analysis1}), (\ref{eq:integral}), with possible first
integrals of the form

\begin{equation}\label{eq:firstint}
\aligned
 r^2 &= E + p_2 e_1 + p_1 e_2\\
 r \gamma_3 &= F-q_2e_1 - q_1e_2\\
  \gamma_3^2& = G +r_2e_1 + r_1e_2\\
e_1 \cdot e_2 &= k^2.
\endaligned
\end{equation}

We search for functions $E, F, G, p_i, q_i, r_i$, $i=1,2$ starting
from
$$
(E + p_2 e_1 + p_1 e_2)(G +r_2e_1 + r_1e_2)-(F-q_2e_1 - q_1e_2)^2=0.
$$
We want to end up with a relation of the form (\ref{eq:integral}).
One set of conditions is the annulation of the coefficients with
$e_1^2, e_2^2$:

\begin{equation}\label{eq:qgen}
 p_1r_1 = q_1^2,\, \, \, p_2r_2 = q_2^2.
\end{equation}
We come to the relation
\begin{equation}\label{eq:integral1}
 \tilde{P}_2e_1+\tilde{P}_1e_2=C(x_1,x_2)-e_1e_2A(x_1, x_2)
\end{equation}
where

\begin{equation}
 A = p_1r_2 + p_2r_1 - 2 q_1q_2, \quad
  C = F^2 - EG,
\end{equation}
and
\begin{equation}\label{eq:Pgen}
\tilde{P}_1 = r_1E + 2q_1F + p_1G, \quad
  \tilde{P}_2 = r_2E + 2q_2F + p_2G.
\end{equation}
Let us assume:

\begin{equation}\label{eq:firstguess}
B_1=E\sqrt{r_1r_2} + F(\sqrt{p_1r_2}+\sqrt{p_2r_1}) +
G\sqrt{p_1p_2}.
\end{equation}
Then, we have the following
\begin{lemma}\label{lemma:factorization} The functions $A, B_1, C,
\tilde{P}_1, \tilde{P}_2$ defined above, satisfy the identity:
$$
B_1^2-AC = \tilde{P}_1 \tilde{P}_2.
$$
\end{lemma}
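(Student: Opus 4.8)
The plan is to exploit the two constraints (\ref{eq:qgen}), namely $p_1 r_1 = q_1^2$ and $p_2 r_2 = q_2^2$, in order to reduce the claimed identity to a single linear-algebraic fact about $2\times 2$ symmetric bilinear forms. First I would introduce fixed branches $a=\sqrt{p_1}$, $b=\sqrt{r_1}$, $c=\sqrt{p_2}$, $d=\sqrt{r_2}$, so that the constraints become $q_1=ab$ and $q_2=cd$, and so that the mixed radicals appearing in (\ref{eq:firstguess}) factor as $\sqrt{r_1 r_2}=bd$, $\sqrt{p_1 r_2}=ad$, $\sqrt{p_2 r_1}=bc$, $\sqrt{p_1 p_2}=ac$. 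All of these must be read with one globally consistent choice of branch, and ensuring this consistency (equivalently, pinning down the signs of $q_1,q_2$ relative to the radicals in $B_1$) is the one genuinely delicate point: a wrong relative sign would replace the stated identity by a different one. This sign bookkeeping is where I expect the only real care to be needed.

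With these substitutions in place, the three quantities $\tilde P_1$, $\tilde P_2$ and $B_1$ are recognized as values of a single symmetric bilinear form. Setting
$$
M=\begin{pmatrix} G & F \\ F & E \end{pmatrix}, \qquad \mathbf{x}=\begin{pmatrix} a \\ b \end{pmatrix}, \qquad \mathbf{y}=\begin{pmatrix} c \\ d \end{pmatrix},
$$
a short check from (\ref{eq:Pgen}) and (\ref{eq:firstguess}) gives $\tilde P_1=\mathbf{x}^{T} M \mathbf{x}$, $\tilde P_2=\mathbf{y}^{T} M \mathbf{y}$ and $B_1=\mathbf{x}^{T} M \mathbf{y}$. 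At the same time the remaining data are read off directly from the definitions: $A=p_1 r_2+p_2 r_1-2q_1 q_2=(ad-bc)^2=(\det[\mathbf{x}\,|\,\mathbf{y}])^2$ and $C=F^2-EG=-\det M$.

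The key step is then the Cauchy--Binet / Lagrange identity applied to the $2\times 2$ matrix $N=[\mathbf{x}\,|\,\mathbf{y}]$: since $M$ is symmetric,
$$
\begin{pmatrix} \tilde P_1 & B_1 \\ B_1 & \tilde P_2 \end{pmatrix}=N^{T} M N,
$$
and taking determinants yields $\tilde P_1\tilde P_2-B_1^2=\det(N^{T}MN)=(\det N)^2\,\det M$. Substituting $(\det N)^2=A$ and $\det M=-C$ gives $\tilde P_1\tilde P_2-B_1^2=-AC$, which is exactly $B_1^2-AC=\tilde P_1\tilde P_2$. Anyone preferring to avoid the radical substitution can instead expand both sides as polynomials in $E,F,G$ and verify that the six coefficients of $E^2,F^2,G^2,EF,EG,FG$ agree, using (\ref{eq:qgen}) to eliminate $q_1^2,q_2^2$ in favour of $p_ir_i$; but the matrix formulation exposes the structural reason and confines all the subtlety to the choice of branches described above.
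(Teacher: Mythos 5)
Your proof is correct. The paper itself offers no written proof of Lemma \ref{lemma:factorization} — it is stated as following from the definitions, with the implicit understanding (as with the neighbouring lemma) that one verifies it by a straightforward expansion in $E,F,G$ using $p_ir_i=q_i^2$ to eliminate $q_1^2$ and $q_2^2$; doing that expansion, the coefficients of $E^2$, $G^2$, $EF$, $FG$ cancel and the $F^2$ and $EG$ coefficients each produce the factor $p_1r_2+p_2r_1-2q_1q_2=A$, giving $B_1^2-\tilde P_1\tilde P_2=A(F^2-EG)=AC$. Your route is genuinely different in presentation: by writing $\tilde P_1=\mathbf{x}^TM\mathbf{x}$, $\tilde P_2=\mathbf{y}^TM\mathbf{y}$, $B_1=\mathbf{x}^TM\mathbf{y}$ with $M=\left(\begin{smallmatrix} G & F\\ F & E\end{smallmatrix}\right)$ and taking the determinant of $N^TMN$, you reduce the identity to multiplicativity of $2\times2$ determinants, which explains \emph{why} the factorization holds and makes clear that the only input is $q_i^2=p_ir_i$ together with a consistent choice of branches. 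You are right to flag the branch consistency as the one delicate point: the lemma as stated requires $q_i=\sqrt{p_i}\sqrt{r_i}$ and the mixed radicals in $B_1$ to be taken compatibly, and this is indeed satisfied in the paper's intended application, where (\ref{eq:pqr}) gives $\sqrt{r_i}=x_i$, $\sqrt{p_i}=x_i-a$, $q_i=x_i(x_i-a)$. Your matrix argument buys conceptual clarity and immediate generalizability (to any symmetric $M$ and any pair of vectors), at the cost of introducing the radicals; the paper's implicit computation is branch-free but opaque.
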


\begin{lemma}

For a system which satisfies the relations (\ref{eq:firstint}) and
(\ref{eq:qgen})-(\ref{eq:Pgen}), the functions $f_i$ defined by
$$ f_i=\sqrt{r_i} r+\sqrt{p_i} \gamma_3, \quad i=1,\,2$$
satisfy the assumption (\ref{eq:analysis1}).
\end{lemma}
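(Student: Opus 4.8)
The plan is to establish both identities in (\ref{eq:analysis1}) by a single direct computation: expand $f_i^2$ from the definition $f_i=\sqrt{r_i}\,r+\sqrt{p_i}\,\gamma_3$, substitute the first integrals (\ref{eq:firstint}), and then read off the result as a constant part together with the coefficients of $e_1$ and $e_2$. Squaring gives
$$f_i^2=r_i\,r^2+2\sqrt{r_i p_i}\,r\gamma_3+p_i\,\gamma_3^2,$$
and inserting the three relations of (\ref{eq:firstint}) for $r^2$, $r\gamma_3$ and $\gamma_3^2$ turns this into an expression that is affine in $e_1$ and $e_2$. So the whole lemma reduces to bookkeeping of three coefficients.

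The key algebraic input is the branch identity $\sqrt{r_i p_i}=q_i$, which is exactly what the constraints (\ref{eq:qgen}), namely $p_1r_1=q_1^2$ and $p_2r_2=q_2^2$, make possible: fixing $\sqrt{r_i}$ as the positive branch and setting $\sqrt{p_i}:=q_i/\sqrt{r_i}$ yields $\sqrt{p_i}^2=q_i^2/r_i=p_i$ and $\sqrt{r_i}\sqrt{p_i}=q_i$. With this choice the $e_j$-free part of $f_i^2$ is $r_iE+2q_iF+p_iG$, which is precisely $\tilde P_i$ by (\ref{eq:Pgen}). Thus the constant term already reproduces the role of $P(x_i)$ in (\ref{eq:analysis1}), now played by $\tilde P_i$.

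For the linear terms I would collect coefficients and use (\ref{eq:qgen}) to annihilate the unwanted one. Taking $i=1$, the coefficient of $e_2$ is $r_1p_1-2q_1^2+p_1r_1=2(p_1r_1-q_1^2)$, which vanishes, while the coefficient of $e_1$ is $p_2r_1-2q_1q_2+p_1r_2$, which equals $A$ by its definition $A=p_1r_2+p_2r_1-2q_1q_2$. By the manifest symmetry under exchanging the indices $1\leftrightarrow 2$, the case $i=2$ gives a vanishing $e_1$-coefficient and an $e_2$-coefficient equal to $A$. Hence $f_1^2=\tilde P_1+Ae_1$ and $f_2^2=\tilde P_2+Ae_2$, which is exactly (\ref{eq:analysis1}) with $\tilde P_1,\tilde P_2$ in the place of $P(x_1),P(x_2)$.

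The only genuinely delicate point is this sign bookkeeping in $\sqrt{r_ip_i}=q_i$: the two square roots in $f_i$ must be taken with matching signs, since the opposite choice would replace $q_i$ by $-q_i$ and spoil the identification of the constant term with $\tilde P_i$. Once the branches are fixed as above everything else is routine bilinear algebra in $e_1,e_2$, with no curve geometry entering; the lemma is purely preparatory, serving to verify the structural hypothesis (\ref{eq:analysis1}) so that Theorem \ref{th:sufficient} can subsequently be applied.
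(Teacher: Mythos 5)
Your proof is correct and follows essentially the same route as the paper's: expand $f_i^2$, substitute the integrals (\ref{eq:firstint}), and check that the constant part is $\tilde P_i$ by (\ref{eq:Pgen}) while (\ref{eq:qgen}) kills the unwanted $e_j$-coefficient and identifies the other with $A$. Your explicit remark on choosing the branches so that $\sqrt{r_i}\sqrt{p_i}=q_i$ is a small clarification of a step the paper leaves implicit, not a different argument.
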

\begin{proof} Proof is done by a straightforward calculation.
\begin{equation}
\aligned f_1^2 &=r_1 r^2+2 \sqrt{p_1 r_1} r \gamma_3 +p_1
\gamma_3^2\\
&= r_1 (E+p_2 e_1+p_1 e_2)+ 2q_1 (F-q_2 e_1-q_1 e_2)+p_1 (G+ r_2
e_1+r_1 e_2)\\
&= Er_1 +Gp_1 +2q_1 F +e_1(r_1 p_2+p_1
r_2-2q_1q_2)+e_2(r_1p_1+p_1r_1-2q_1^2)\\
 &= \tilde{P}_1 +e_1 (r_1 p_2+p_1
r_2-2\sqrt{r_1p_2r_2p_1})= \tilde{P}_1 +e_1 A.
\endaligned
\end{equation}
In the same way, we get $f_2^2 = \tilde{P}_2 +e_2 A.$
\end{proof}
\medskip

Now, let us introduce the second assumption:

\begin{equation}\label{eq:pqr}
 r_i  = x_i^2,\, p_i  = (x_i-a)^2,\,
q_i  = x_i(x_i-a), \, i=1,2.
\end{equation}

In order to get a relation of the form (\ref{eq:integral}) from
(\ref{eq:integral1}), the last, crucial condition, is $
 \tilde{P}_1  = P(x_1),\,\,  \tilde{P}_2  = P(x_2),$
which means that the functions $\tilde{P}_1$ and $\tilde{P}_2$ are
equal to {\bf a polynomial $P$ of one variable, $x_1$ in the former
and $x_2$ in the latter case}.

In order to satisfy the last requirement, we need to guess a right
form of the functions $E, F, G$.  Let us, finally, assume:

\begin{equation}\label{eq:EFG}
\aligned
 E &= -2(x_1-a)^2(x_2-a)^2+C_1\\
 F &= x_1(x_1-a)(x_2-a)^2 + x_2(x_2-a)(x_1-a)^2 + C_2\\
 G &= -2 x_1x_2(x_2-a)(x_1-a) + C_3.
\endaligned
\end{equation}

\begin{theorem}\label{th:ex} The polynomials $E, F, G, p_i,
 q_i, r_i$ define a  completely integrable system, with
 \begin{equation}\label{th:ex-function}
 f_i = x_i r + (x_i-a) \gamma_3, \quad i=1,2.
 \end{equation}
 The system is explicitly integrated on a pinched genus-two curve
 in the theta-functions of the generalized Jacobian of an elliptic curve.
 \end{theorem}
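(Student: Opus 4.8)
The plan is to verify that the data $E,F,G,p_i,q_i,r_i$ defined in (\ref{eq:pqr}) and (\ref{eq:EFG}) satisfy all the hypotheses of Theorem \ref{th:sufficient}, and then to invoke that theorem directly, analyzing the resulting sextic $\Phi$ to identify the pinched genus-two curve. First I would substitute the specific choices (\ref{eq:pqr}) into the quadratic-constraint conditions (\ref{eq:qgen}); these reduce to the obvious identities $x_i^2(x_i-a)^2 = (x_i(x_i-a))^2$, so (\ref{eq:qgen}) holds automatically, and consequently the whole machinery of Lemma \ref{lemma:factorization} and the subsequent Lemma applies, giving $f_i^2 = \tilde P_i + e_i A$ with $f_i = x_i r + (x_i-a)\gamma_3$ as asserted in (\ref{th:ex-function}).

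The crucial step is to enforce the final requirement $\tilde P_1 = P(x_1)$ and $\tilde P_2 = P(x_2)$, i.e. that the functions $\tilde P_i = r_i E + 2 q_i F + p_i G$ from (\ref{eq:Pgen}) each depend on a single variable. I would substitute (\ref{eq:pqr}) and (\ref{eq:EFG}) into (\ref{eq:Pgen}) and expand. The point of the specific polynomial ansatz (\ref{eq:EFG}) is that the leading, constant-free parts of $r_i E + 2q_i F + p_i G$ telescope: the cross-terms involving both $x_1$ and $x_2$ must cancel so that only powers of $x_i$ survive. I expect this to be the main obstacle, since it is exactly the delicate algebraic cancellation that forces the correct coefficients $C_1,C_2,C_3$ and pins down the degree-three polynomial $P(x)$; the terms carried by $C_1,C_2,C_3$ contribute the lower-order coefficients of $P$ while the $a$-dependent quadratic pieces of $E,F,G$ are arranged precisely so their mixed $(x_1,x_2)$ contributions drop out. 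Once this is checked, $\tilde P_i = P(x_i)$ for a single cubic $P$, and likewise $C(x_1,x_2) = F^2 - EG$ and $A = p_1 r_2 + p_2 r_1 - 2 q_1 q_2$ assemble into a discriminantly separable $\mathcal F = A s^2 + B s + C$ via Lemma \ref{lemma:factorization}.

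Having established that the system meets the definition of a system of the Kowalevski type (the relations (\ref{eq:analysis}), (\ref{eq:analysis1}), (\ref{eq:integral}) all hold), I would apply Theorem \ref{th:sufficient} verbatim to conclude linearization on the Jacobian of $y^2 = \Phi(s)$ with $\Phi(s) = P(s)(s-k)(s+k)$, where $P$ is now the cubic just identified and $e_1 e_2 = k^2$. The remaining task, and the reason for the phrase \emph{pinched genus-two curve}, is to examine the discriminant of $\Phi$. Because the ansatz (\ref{eq:pqr}) introduces the common shift by $a$, the induced polynomial $J$ (equivalently $P$ here) acquires a repeated root, so two branch points of the sextic $\Phi$ collide. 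I would verify this by computing where $P(s)$ shares a root with $(s-k)(s+k)$ or has a double root, and conclude that $\Phi$ degenerates, whence the curve $y^2 = \Phi(s)$ is a genus-two curve with a node. The Abel map (\ref{eq:kowvareq}) then takes values not in the ordinary Jacobian but in the generalized Jacobian of the normalizing elliptic curve, which is precisely the stated conclusion; integrability follows since the two quadratures in (\ref{eq:kowvareq}) are explicit.
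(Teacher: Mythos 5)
Your route is the same as the paper's: the paper gives no separate proof of Theorem \ref{th:ex} — it rests exactly on checking (\ref{eq:qgen}) for the ansatz (\ref{eq:pqr}), invoking Lemma \ref{lemma:factorization} and the lemma giving $f_i^2=\tilde P_i+e_iA$, verifying that (\ref{eq:EFG}) makes $\tilde P_i$ univariate, and then feeding the result into Theorem \ref{th:sufficient}; the explicit quadratures are recorded afterwards in Proposition \ref{prop:eq-th1} and the proposition following it. Your identification of the cancellation in $\tilde P_i=r_iE+2q_iF+p_iG$ as the crux is right, and the computation does work: all mixed terms telescope and one is left with
$$\tilde P_i=C_1x_i^2+2C_2x_i(x_i-a)+C_3(x_i-a)^2,\qquad A=p_1r_2+p_2r_1-2q_1q_2=a^2(x_1-x_2)^2 .$$

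The one substantive slip is that you announce $P$ as a cubic ("the degree-three polynomial $P(x)$", "where $P$ is now the cubic just identified"); as the display above shows, $P$ is only \emph{quadratic} in this example, and moreover $P$ should not be conflated with $J$: in Theorem \ref{th:sufficient} the curve is $y^2=J(z)(z-k)(z+k)$ with $J$ the factor of $\mathcal D_{x_1}(\mathcal F)$, and $J=P$ was a special feature of the example of Subsection \ref{subsec:example}, not a general fact. This matters because the drop in degree of $P$ (from the generic quartic to a quadratic) is the actual source of the degeneration you need for the "pinched" claim: $y^2=P(x)$ is then rational, so by the isomorphism of $\Gamma_1$ and $\Gamma_2$ the cubic $J$ must have a multiple root, forcing a multiple root of the sextic $\Phi$. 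The paper records the outcome concretely in (\ref{eq:sysp22}): after a shift the quadratures become $ds/(s\sqrt{\Phi_1(s)})$ and $ds/\sqrt{\Phi_1(s)}$ with $\Phi_1$ of degree three, i.e. a holomorphic differential together with a non-holomorphic one on the normalizing elliptic curve — which is exactly what "generalized Jacobian of an elliptic curve" means here. Your plan to "examine the discriminant of $\Phi$" would get you there, but attributing the collision of branch points to "the common shift by $a$" is not a proof and points at the wrong mechanism; you would need to carry out the degree count above (or the computation leading to $\Phi_1(s)=s(s-e_4)(s-e_5)$) to substantiate the final sentence of the theorem. A smaller point, which the paper also leaves implicit, is that Lemma \ref{lemma:factorization} only delivers the $s$-discriminant identity $B_1^2-AC=\tilde P_1\tilde P_2$; full discriminant separability of $\mathcal F=As^2+2B_1s+C$ in $x_1$ and $x_2$, which Theorem \ref{th:sufficient} (via Corollary 1 of \cite{Drag3}) actually uses, still has to be checked for this concrete $A,B_1,C$.
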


\medskip

\medskip
Similar systems appeared in a  slightly different context in the
works of Appel'rot, Mlodzeevskii, Delone in their study of
degenerations of the Kowalevski top (see \cite {App},
\cite{Mlo},\cite {Del}). In particular, we may construct {\it
Delone-type} solutions of the last system: $ s_1=0, \,
s_2=\wp(i(t-t_0)/4). $
\medskip

\begin{proposition}\label{prop:eq-th1}The equations of motion for the completely integrable system
described in Theorem \ref{th:ex}, with the functions $f_i,\,i=1,2$
defined by (\ref{th:ex-function}) are
\begin{equation}
\nonumber \aligned \dot x_1 &=-\frac{i}{2}f_1, \qquad \dot e_1 = -me_1,\\
\dot x_2 &=\frac{i}{2}f_2, \qquad \dot e_2 = me_2,\\
\dot r &= (x_2-x_1)(x_2-a)(x_1-a)ai+\frac{i f_2(x_2-a)}{2r}e_1
-\frac{i f_1(x_1-a)}{2r}e_2\\
&-\frac{e_1(x_2-a)^2-e_2(x_1-a)^2}{2r}m,\\ \dot \gamma_3 &=
\frac{e_2x_1^2-e_1x_2^2}{2\gamma_3}m-\frac{i f_2x_2e_1}{2\gamma_3}-\frac{i f_1x_1e_2}{2\gamma_3}\\
&-\frac{i(\gamma_3(a-x_1)(a-x_2)-rx_1x_2)(x_2-x_1)a}{2\gamma_3},
\endaligned
\end{equation}
with
\begin{equation}\label{eq:th2m}
 m=i \frac{-(r+\gamma_3)f_1^2e_2
-(\gamma_3 a(r+\gamma_3)-x_2(\gamma_3^2+2r\gamma_3-r^2))
f_2e_1-ar(x_1-x_2)f_1f_2
}{f_2^2e_1-f_1^2e_2}.
\end{equation}

\end{proposition}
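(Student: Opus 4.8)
The plan is to take as given the part of the dynamics that is already fixed by Theorem~\ref{th:ex}: since the system is of the Kowalevski type, the reduced variables evolve by \eqref{eq:analysis}, namely $\dot x_1=-\tfrac{i}{2}f_1$, $\dot x_2=\tfrac{i}{2}f_2$, $\dot e_1=-me_1$, $\dot e_2=me_2$, with $f_i=x_ir+(x_i-a)\gamma_3$ from \eqref{th:ex-function} and a single still-undetermined scalar $m$. I would then recover $\dot r$, $\dot\gamma_3$ and $m$ by imposing that the quantities carried by \eqref{eq:firstint} be conserved: differentiating the three relations for $r^2$, $r\gamma_3$, $\gamma_3^2$ in time produces three scalar equations that are linear in the three unknowns $\dot r,\dot\gamma_3,m$, and I would solve them triangularly. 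The fourth relation $e_1e_2=k^2$ is automatically preserved, since $\dot e_1e_2+e_1\dot e_2=-me_1e_2+e_1me_2=0$, so it imposes no further constraint.

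First I would differentiate $r^2=E+p_2e_1+p_1e_2$, inserting $p_i=(x_i-a)^2$ and $E$ from \eqref{eq:EFG}, \eqref{eq:pqr}, together with the known $\dot x_i,\dot e_i$. Solving $2r\dot r=\dots$ for $\dot r$ reproduces the $e_i$- and $m$-terms of the claimed expression directly, while the $f_i$-linear part, after substituting $f_i=x_ir+(x_i-a)\gamma_3$ and dividing by $2r$, collapses by the identity $(x_2-a)f_1-(x_1-a)f_2=a(x_2-x_1)r$ (here the $\gamma_3$-contributions cancel) into the leading term $(x_2-x_1)(x_2-a)(x_1-a)ai$.

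Differentiating $\gamma_3^2=G+r_2e_1+r_1e_2$, with $r_i=x_i^2$ and $G$ from \eqref{eq:EFG}, in exactly the same fashion yields $\dot\gamma_3$; now the $f_i$-linear part of $2\gamma_3\dot\gamma_3$ regroups, by collecting the coefficients of $r$ and of $\gamma_3$ separately, into $-ia(x_2-x_1)(\gamma_3(x_1-a)(x_2-a)-rx_1x_2)$. Both $\dot r$ and $\dot\gamma_3$ are linear in $m$. The remaining relation $r\gamma_3=F-q_2e_1-q_1e_2$ then fixes $m$: substituting $\dot r$ and $\dot\gamma_3$ into its time derivative gives a scalar equation $\alpha m+\beta=0$, and after clearing $2r\gamma_3$ the coefficient $\alpha$ assembles, via $f_i^2=[x_ir+(x_i-a)\gamma_3]^2$, into $-(f_2^2e_1-f_1^2e_2)$, exactly the denominator of \eqref{eq:th2m}; the term $\beta$ supplies the numerator, so $m=-\beta/\alpha$ reproduces \eqref{eq:th2m}.

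The conceptual content is light; the main obstacle is purely the algebraic bookkeeping, concentrated in the $r\gamma_3$-relation, where $\beta$ must be brought to the compact shape $i[-(r+\gamma_3)f_1^2e_2-(\gamma_3 a(r+\gamma_3)-x_2(\gamma_3^2+2r\gamma_3-r^2))f_2e_1-ar(x_1-x_2)f_1f_2]$. Pushing this through relies repeatedly on $f_i=x_ir+(x_i-a)\gamma_3$, on the factorization $A=p_1r_2+p_2r_1-2q_1q_2=a^2(x_1-x_2)^2$, on $f_i^2=P(x_i)+e_iA$ from \eqref{eq:analysis1} (so that the $A$-terms cancel and $f_2^2e_1-f_1^2e_2=P(x_2)e_1-P(x_1)e_2$), and on $e_1e_2=k^2$ to absorb the quadratic-in-$e_i$ contributions. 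Everything else is routine differentiation of \eqref{eq:firstint}.
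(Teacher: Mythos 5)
Your proposal follows exactly the paper's own route: take the $\dot x_i,\dot e_i$ equations as given, differentiate the $r^2$ and $\gamma_3^2$ integrals to extract $\dot r$ and $\dot\gamma_3$ (each linear in $m$), then differentiate the $r\gamma_3$ integral and substitute to obtain a linear equation whose solution is (\ref{eq:th2m}). The extra identities you record, such as $(x_2-a)f_1-(x_1-a)f_2=a(x_2-x_1)r$ and $f_2^2e_1-f_1^2e_2=P(x_2)e_1-P(x_1)e_2$, are correct and simply make explicit the bookkeeping the paper leaves implicit.
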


\begin{proof}
The system of equations described in Theorem \ref{th:ex} is
\begin{equation}
\nonumber \aligned \dot x_1 &= -\frac{i}{2}(x_1
r+(x_1-a)\gamma_3), \quad  \dot e_1 = -me_1\\
\dot x_2 &= \frac{i}{2}(x_2 r+(x_2-a)\gamma_3), \quad \dot e_2
= me_2,
\endaligned
\end{equation}
with the first integrals
\begin{equation}\label{eq:th2r^2}
r^2 =-2(x_1-a)^2(x_2-a)^2+C_1+(x_2-a)^2 e_1+(x_1-a)^2 e_2,
\end{equation}
\begin{equation}\label{eq:th2rg}
r \gamma_3 = x_1(x_1-a)(x_2-a)^2 + x_2(x_2-a)(x_1-a)^2+C_2
-x_2(x_2-a)e_1-x_1(x_1-a)e_2,
\end{equation}
\begin{equation}\label{eq:th2g^2}
\gamma_3^2 = -2 x_1x_2(x_2-a)(x_1-a) + C_3 +x_2^2 e_1 +x_1^2 e_2,
\end{equation}
$$e_1e_2=k^2.$$

By differentiating the first integrals (\ref{eq:th2r^2}) and
(\ref{eq:th2g^2}) we get equations for $\dot r$ and $\dot \gamma_3$.
Then, by differentiating the first integral (\ref{eq:th2rg}) we get
\begin{equation}
\nonumber \aligned &r\dot \gamma_3+\dot r \gamma_3 =\dot x_1
(x_1-a)(x_2-a)^2+x_1 \dot x_1(x_2-a)^2+x_1(x_1-a)2(x_2-a)\dot
x_2\\
&+\dot x_2(x_2-a)(x_1-a)^2+x_2 \dot
x_2(x_1-a)^2+x_2(x_2-a)2(x_1-a)\dot x_1-x_1\dot x_1 e_2\\
&-\dot x_1(x_1-a)e_2-mx_1(x_1-a)e_2-\dot
x_2(x_2-a)e_1-x_2\dot x_2e_1+m x_2(x_2-a)e_1.
\endaligned
\end{equation}
By plugging the obtained expressions  for $\dot r$ and $\dot
\gamma_3$ we get an equation for $m$ with the solution
(\ref{eq:th2m}).
\end{proof}

\medskip

\begin{proposition}
The system of differential equations defined by Proposition
\ref{prop:eq-th1} is integrated  through the solutions of the system
\begin{equation}\label{eq:sysp22}
\aligned \frac {ds_1}{s_1\sqrt{\Phi_1(s_1)}} +\frac
{ds_2}{s_2\sqrt{\Phi_1(s_2)}}&=0\\
\frac {ds_1}{\sqrt{\Phi_1(s_1)}} +\frac
{ds_2}{\sqrt{\Phi_1(s_2)}}&=\frac{i}{2}dt,
\endaligned
\end{equation}
where $\Phi_1(s)=s(s-e_4)(s-e_5)$ is a polynomial of degree 3.
\end{proposition}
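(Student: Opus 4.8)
The plan is to read off (\ref{eq:sysp22}) as the degenerate specialization of the general linearization of Theorem \ref{th:sufficient}, the degeneracy being forced by the fact that for the system of Theorem \ref{th:ex} the separating polynomial $P$ has degree only two.

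First I would assemble the discriminantly separable data. By Theorem \ref{th:ex} together with the construction (\ref{eq:pqr})--(\ref{eq:EFG}), the flow of Proposition \ref{prop:eq-th1} is a system of the Kowalevski type: it satisfies (\ref{eq:analysis}), (\ref{eq:analysis1}) and (\ref{eq:integral}) with
$$A(x_1,x_2)=p_1r_2+p_2r_1-2q_1q_2=a^2(x_1-x_2)^2,\qquad C=F^2-EG,$$
and with $f_i=x_ir+(x_i-a)\gamma_3$. Substituting (\ref{eq:pqr}) and (\ref{eq:EFG}) into $\tilde P_i$ of (\ref{eq:Pgen}) and invoking Lemma \ref{lemma:factorization}, the $a$-homogeneous parts cancel identically, leaving
$$P(x)=C_1x^2+2C_2\,x(x-a)+C_3(x-a)^2,$$
a polynomial of degree two. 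All hypotheses of Theorem \ref{th:sufficient} then hold with this $A$, $P$ and $C$, so the flow is linearized on $y^2=\Phi(s)=J(s)(s-k)(s+k)$ by the Abel equations (\ref{eq:kowvareq}), where $s_1,s_2$ are the roots in $s$ of $\mathcal F=As^2+Bs+C=0$ and $k^2=e_1e_2$.

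Second I would exploit $\deg P=2$. Then $\Gamma_1\colon y^2=P(x)$ is rational, and through the discriminant-separability isomorphism $\Gamma_1\cong\Gamma_2\colon y^2=J(s)$ the cubic $J$ is forced to be singular, i.e. to acquire a multiple root. Computing $\mathcal D_{x_1}(\mathcal F)$, dividing out the factor $P(x_2)$ to isolate $J$, and recording $k$, I expect to find that $s=0$ is a root of $\Phi$ of multiplicity three, so that
$$\Phi(s)=J(s)(s-k)(s+k)=\lambda\,s^{2}\,\Phi_1(s),\qquad \Phi_1(s)=s(s-e_4)(s-e_5),$$
for a constant $\lambda$ and simple roots $e_4,e_5$. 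This is exactly the pinching of the genus-two curve $y^2=\Phi$ down to the elliptic curve $y^2=\Phi_1$ that is announced in Theorem \ref{th:ex}. Extracting the square root, $\sqrt{\Phi(s)}=\sqrt\lambda\,s\sqrt{\Phi_1(s)}$, the holomorphic differential of the genus-two picture becomes the second-kind differential $ds/(s\sqrt{\Phi_1})$ on $y^2=\Phi_1$ (a double pole at the Weierstrass point over $s=0$), while the weighted differential $s\,ds/\sqrt{\Phi}$ becomes the holomorphic differential $ds/\sqrt{\Phi_1}$; these are the two differentials of the generalized Jacobian of the elliptic curve.

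Finally I would substitute $\sqrt{\Phi}=\sqrt\lambda\,s\sqrt{\Phi_1}$ into the two relations of (\ref{eq:kowvareq}): the first relation goes over into the first relation of (\ref{eq:sysp22}), and the second (weighted) relation goes over into the second relation of (\ref{eq:sysp22}), the normalizing constant on the right becoming $i/2$ once the factor $a^2$ in $A$, the leading coefficient $\lambda$, the half in $\dot x_i=\mp\tfrac i2 f_i$, and the branch $\sqrt{s^2}=s$ are all carried through. I expect the main obstacle to be the middle step: one must compute $\mathcal D_{x_1}(\mathcal F)$ explicitly and verify both the multiplicity (three) and the location (at $s=0$) of the degenerate root of $\Phi$ --- possibly after the same kind of linear normalization used in \S\ref{subsec:example} --- and, because here $\mathcal F$ has degree four rather than two in $x_1$, one must re-derive the Kowalevski magic change of variables in this degenerate setting rather than quote it verbatim from the degree-two theory. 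Everything following the factorization $\Phi=\lambda s^2\Phi_1$ is routine bookkeeping of constants.
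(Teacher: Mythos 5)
The paper states this proposition without proof, so your proposal can only be measured against the machinery the authors actually set up (Theorem \ref{th:sufficient} and Theorem \ref{th:ex}). Your overall mechanism is surely the intended one: compute $A=p_1r_2+p_2r_1-2q_1q_2=a^2(x_1-x_2)^2$ and $P(x)=C_1x^2+2C_2x(x-a)+C_3(x-a)^2$ (both of these computations are correct), run the linearization (\ref{eq:kowvareq}) on $y^2=\Phi(s)=J(s)(s-k)(s+k)$, and then divide a square factor out of $\Phi$ so that $ds/\sqrt{\Phi}$ becomes $ds/(s\sqrt{\Phi_1})$ while $s\,ds/\sqrt{\Phi}$ becomes the holomorphic differential $ds/\sqrt{\Phi_1}$; that is the only way to produce the precise shape of (\ref{eq:sysp22}).

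There is, however, a genuine gap at exactly the step you defer to the end. Theorem \ref{th:sufficient} cannot be quoted as a black box, because the polynomial $\mathcal F=As^2+2B_1s+C$ assembled from Lemma \ref{lemma:factorization} with the data (\ref{eq:pqr})--(\ref{eq:EFG}) has degree four, not two, in each $x_i$: the coefficient of $x_1^4$ in $C=F^2-EG$ is $a^2(x_2-a)^2\neq 0$. Hence $\mathcal F$ is not discriminantly separable in the sense of (\ref{eq:discriminantsep}), and Corollary 1 of \cite{Drag3} (the magic change of variables), which drives the whole of Theorem \ref{th:sufficient}, does not apply to it. One can repair this by completing the square: with $\sigma=s+(x_1-a)(x_2-a)$ one gets $\mathcal F=A\sigma^2+2P_{12}\sigma+\delta$, where $P_{12}$ is the polarization of the quadratic $P$ and $\delta=C_2^2-C_1C_3$ is a constant, and this shifted polynomial is genuinely of degree two in each variable with $\mathcal D_{x_1}=4\sigma^2(\Lambda-2a^2\sigma)P(x_2)$, $\Lambda$ being the leading coefficient of $P$. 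But this exposes two problems with your sketch. First, the roots entering the $(s\pm k)$-factorization (roots of $\mathcal F$, tied through its constant term $C$ to the relation (\ref{eq:integral}) and to $e_1e_2=k^2$) and the roots to which the change of variables applies (roots of the shifted polynomial) differ by the non-constant function $(x_1-a)(x_2-a)$; reconciling the two is the real content of the proposition and is not ``routine bookkeeping of constants.'' Second, the degeneration of $J$ comes out as a double root, $J(\sigma)=\sigma^2(\Lambda-2a^2\sigma)$, rather than the triple root you predict, and the resulting cubic $(\Lambda-2a^2\sigma)(\sigma-k)(\sigma+k)$ has its third branch point at $\Lambda/(2a^2)$ rather than at the origin, so the announced form $\Phi_1(s)=s(s-e_4)(s-e_5)$ does not yet follow. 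Until the multiplicity and location of the degenerate root and the mismatch between the two root systems are settled by explicit computation, the argument is incomplete.
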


\medskip

By choosing expressions for $p_i,\,r_i, \, i=1,2$ and then finding
$E,\,F,\,G$ such that $\tilde{P}_i=Er_i+2Fq_i+Gp_i$ be polynomials,
we can obtain new examples of systems of the Kowalevski type.

\medskip

\subsection{Another class of system of Kowalevski type}\label{sec:def2}

\

In this section we will consider another class of systems of
Kowalevski type. We consider a situation analogue to that from the
beginning of the Section \ref{subsec:intro}. The only difference is
that the systems we are going to consider now,  reduce to
(\ref{eq:analysis}), where
\begin{equation}\label{eq:analysis2}
\aligned
f_1^2&=P(x_1)-\frac{C}{e_2}\\
f_2^2&=P(x_2)-\frac{C}{e_1}.
\endaligned
\end{equation}
The next Proposition is an analogue of Theorem \ref{th:sufficient}.
Thus, the new class of systems also has a striking property of
jumping genus in integration procedure.

\medskip

\begin{proposition}\label{th:sufficient1}
Given a system which reduces to (\ref{eq:analysis}), where
\begin{equation}
\aligned
f_1^2&=P(x_1)-\frac{C}{e_2}\\
f_2^2&=P(x_2)-\frac{C}{e_1}
\endaligned
\end{equation}
and integrals reduce to (\ref{eq:integral}); $A, C, P$ form a
discriminantly separable polynomial $\mathcal{F}$ given with
(\ref{eq:discriminantsep}). Then the system is linearized on the
Jacobian of the curve
$$
y^2=J(z)(z-k)(z+k),
$$
where $J$ is a polynomial factor of the discriminant of
$\mathcal{F}$ as a polynomial in $x_1$ and $k$ is a constant such
that
$$
e_1e_2=k^2.
$$
\end{proposition}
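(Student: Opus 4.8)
The plan is to follow the proof of Theorem \ref{th:sufficient} almost verbatim, since the present hypotheses differ from it only in the form (\ref{eq:analysis2}) of $f_1^2,f_2^2$, and everything preceding the computation of the $f_i^2$ is common to both statements. First I would extract $e_1e_2=k^2$ from the equations of motion on $e_i$, and then combine the integral relation (\ref{eq:integral}) with $e_1e_2=k^2$ to form
$$\left(\sqrt{e_1}\sqrt{P(x_2)}\pm\sqrt{e_2}\sqrt{P(x_1)}\right)^2=C(x_1,x_2)-k^2A(x_1,x_2)\pm 2k\sqrt{P(x_1)P(x_2)}.$$
Exactly as before, discriminant separability of $\mathcal{F}$ and the Vi\`ete formulae for the roots $s_1,s_2$ of $\mathcal{F}(x_1,x_2,s)=0$ factor the right-hand sides as $(s_1+k)(s_2-k)$ and $(s_1-k)(s_2+k)$, let me solve for $2\sqrt{e_1}\sqrt{P(x_2)/A}$ and $2\sqrt{e_2}\sqrt{P(x_1)/A}$, and record $(s_1-s_2)^2=4P(x_1)P(x_2)/A^2$. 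None of this uses the particular shape of the $f_i^2$, so it transfers unchanged.

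The one genuinely new step is to rewrite the new $f_i^2$ in the $s$-variables. Using (\ref{eq:integral}) to eliminate $C$ and $e_1e_2=k^2$ to replace $1/e_2$ by $e_1/k^2$, the relation $f_1^2=P(x_1)-C/e_2$ collapses to $f_1^2=-e_1\bigl(P(x_2)e_1/k^2+A\bigr)$, and symmetrically for $f_2^2$. I would then substitute the factored expressions for $e_1A$ and $e_1^2P(x_2)$ (and their $1\leftrightarrow2$ analogues) from the common step. The computation hinges on the identity
$$\left(s_1s_2\pm\sqrt{(s_1^2-k^2)(s_2^2-k^2)}\right)^2-k^4=\left(s_1\sqrt{s_2^2-k^2}\pm s_2\sqrt{s_1^2-k^2}\right)^2,$$
which yields the compact forms $f_1^2=-P(x_1)\bigl(s_1\sqrt{s_2^2-k^2}+s_2\sqrt{s_1^2-k^2}\bigr)^2/\bigl(k^2(s_1-s_2)^2\bigr)$ and $f_2^2=-P(x_2)\bigl(s_1\sqrt{s_2^2-k^2}-s_2\sqrt{s_1^2-k^2}\bigr)^2/\bigl(k^2(s_1-s_2)^2\bigr)$.

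Feeding these into the equations of motion $4\dot x_i^2=-f_i^2$ of (\ref{eq:analysis}), forming the sum and difference of $dx_1/\sqrt{P(x_1)}$ and $dx_2/\sqrt{P(x_2)}$, and applying the Kowalevski change of variables (\ref{eq:kowchange}) puts $ds_1/\sqrt{J(s_1)}$ and $ds_2/\sqrt{J(s_2)}$ on the left. The decisive feature is that, with the correct choice of branches, the radical attached to each $s_i$-equation is the matching one $\sqrt{s_i^2-k^2}$, so it is absorbed into $\sqrt{\Phi(s_i)}$ with $\Phi(s)=J(s)(s-k)(s+k)$; dividing and taking suitable linear combinations then produces
$$\frac{ds_1}{\sqrt{\Phi(s_1)}}+\frac{ds_2}{\sqrt{\Phi(s_2)}}=-\frac{1}{k}\,dt,\qquad \frac{s_1\,ds_1}{\sqrt{\Phi(s_1)}}+\frac{s_2\,ds_2}{\sqrt{\Phi(s_2)}}=0,$$
i.e. the Abel map of $y^2=\Phi(s)$ evolves linearly in $t$, which is exactly the assertion.

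I expect the main obstacle to be precisely this last separation. Unlike Theorem \ref{th:sufficient}, where $f_i^2$ already carries the matching-variable radicals $\bigl(\sqrt{s_1^2-k^2}\pm\sqrt{s_2^2-k^2}\bigr)^2$, here the natural expression involves the cross terms $s_1\sqrt{s_2^2-k^2}\pm s_2\sqrt{s_1^2-k^2}$. One must therefore both prove the displayed quartic identity and track signs and branches carefully, so that after (\ref{eq:kowchange}) the $s_2$-dependent radical drops out of the $s_1$-equation and conversely; a careless branch choice leaves a genuinely mixed factor $\sqrt{s_{3-i}^2-k^2}$ on the right and blocks separation of variables. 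Once the branches are fixed, the reduction to the holomorphic differentials $ds/y$ and $s\,ds/y$ on the genus-two curve is immediate, and distinctness of the roots of $\Phi$ guarantees genus two.
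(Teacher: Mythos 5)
Your proposal is correct and follows essentially the same route as the paper's proof: the factorization via $(s_1\pm k)(s_2\mp k)$, the reduction of $f_i^2$ to $-P(x_i)\bigl(s_2\sqrt{s_1^2-k^2}\pm s_1\sqrt{s_2^2-k^2}\bigr)^2/\bigl(k^2(s_1-s_2)^2\bigr)$ via the same quartic identity, and the swapped roles of the two holomorphic differentials in the final Abel--Jacobi system all coincide with the paper's computation. The only cosmetic differences are that you eliminate $C$ through (\ref{eq:integral}) where the paper invokes the Vi\`ete relation $C/A=s_1s_2$ directly, and that your constant $-1/k$ versus the paper's $i/k$ in (\ref{eq:kowvareq1}) is just a branch choice.
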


Although the proof is a variation of the proof of the Theorem
\ref{th:sufficient}, there are some interesting steps and algebraic
transformations we would like to point out.

\begin{proof}  In the same manner as in
Theorem \ref{th:sufficient}, we obtain
$$
\aligned \left(\sqrt{e_1}\sqrt{\frac{P(x_2)}{A}}+
\sqrt{e_2}\sqrt{\frac{P(x_1)}{A}}\right)^2&=(s_1+k)(s_2-k)\\
\left(\sqrt{e_1}\sqrt{\frac{P(x_2)}{A}}-
\sqrt{e_2}\sqrt{\frac{P(x_1)}{A}}\right)^2&=(s_1-k)(s_2+k),
\endaligned
$$
where $s_1, s_2$ are the solutions of the quadratic equation
$$
\mathcal{F}(x_1, x_2, s)=0
$$
in $s$. From the last equations, dividing by $k=\sqrt{e_1e_2}$, we
get
$$
\aligned
2\sqrt{\frac{P(x_2)}{e_2A}}&=\frac{1}{k}\left(\sqrt{(s_1+k)(s_2-k)}+\sqrt{(s_1-k)(s_2+k)}\right)\\
2\sqrt{\frac{P(x_1)}{e_1A}}&=\frac{1}{k}\left(\sqrt{(s_1+k)(s_2-k)}-\sqrt{(s_1-k)(s_2+k)}\right).
\endaligned
$$
Using $\displaystyle (s_1-s_2)^2=4\frac{P(x_1)P(x_2)}{A^2}
$, we get
$$
\aligned f_1^2&=P(x_1)-\frac{C(x_1,x_2)}{e_2}=\frac{(s_1-s_2)^2A^2}{4P(x_2)}-\frac{C}{e_2}=\frac{A^2}{4P(x_2)}\left[(s_1-s_2)^2 -\frac{C}{A}\frac{4P(x_2)}{e_2A}\right]\\
&=\frac{P(x_1)}{(s_1-s_2)^2}\left[ (s_1-s_2)^2-s_1s_2\frac{1}{k^2}\left(\sqrt{(s_1+k)(s_2-k)}+\sqrt{(s_1-k)(s_2+k)} \right)^2 \right]\\
&=\frac{P(x_1)}{(s_1-s_2)^2}\left[s_1^2-2s_1s_2+s_2^2-\frac{2s_1s_2}{k^2}\left(s_1s_2-k^2+\sqrt{(s_1^2-k^2)(s_2^2-k^2)} \right)  \right]\\
&=\frac{P(x_1)}{k^2(s_1-s_2)^2}\left[k^2(s_1^2+s_2^2)-2s_1^2s_2^2-2s_1s_2\sqrt{(s_1^2-k^2)(s_2^2-k^2)} \right]\\
&=-\frac{P(x_1)}{k^2(s_1-s_2)^2}\left[
s_2\sqrt{s_1^2-k^2}+s_1\sqrt{s_2^2-k^2}\right]^2.
\endaligned
$$
Similarly
$$
f_2^2=-\frac{P(x_2)}{k^2(s_1-s_2)^2}\left[
s_2\sqrt{s_1^2-k^2}-s_1\sqrt{s_2^2-k^2}\right]^2.
$$
>From the last two equations and from the equations of motion, we get
$$
\aligned
2\dot{x}_1&=\frac{-i\sqrt{P(x_1)}}{k(s_1-s_2)}\left[s_2\sqrt{s_1^2-k^2}+s_1\sqrt{s_2^2-k^2}\right]\\
2\dot{x}_2&=\frac{-i\sqrt{P(x_2)}}{k(s_1-s_2)}\left[s_2\sqrt{s_1^2-k^2}-s_1\sqrt{s_2^2-k^2}\right],
\endaligned
$$
and
$$
\aligned \frac {d\,x_1}{\sqrt{P(x_1)}}+\frac {d\,x_2}{\sqrt{P(x_2)}}
&=\frac{-i s_2\sqrt{s_1^2-k^2}}{k(s_1-s_2)}dt\\
\frac {d\,x_1}{\sqrt{P(x_1)}}-\frac {d\,x_2}{\sqrt{P(x_2)}}
&=\frac{-i s_1\sqrt{s_2^2-k^2}}{k(s_1-s_2)}dt.
\endaligned
$$
Discriminant separability condition (see Corollary 1 from
\cite{Drag3} ) gives
\begin{equation}\label{eq:kowchange1}
\aligned
\frac{dx_1}{\sqrt{P(x_1)}}+\frac{dx_2}{\sqrt{P(x_2)}}&=\frac{ds_1}{\sqrt{J(s_1)}}\\
\frac{dx_1}{\sqrt{P(x_1)}}-\frac{dx_2}{\sqrt{P(x_2)}}&=-\frac{ds_2}{\sqrt{J(s_2)}}.
\endaligned
\end{equation}
Finally
\begin{equation}\label{eq:kowvareq1}
\aligned
\frac{ds_1}{\sqrt{\Phi(s_1)}}+\frac{ds_2}{\sqrt{\Phi(s_2)}}&=\frac{i}{k}d\,t\\
\frac{s_1\,ds_1}{\sqrt{\Phi(s_1)}}+\frac{s_2\,ds_2}{\sqrt{\Phi(s_2)}}&=0,
\endaligned
\end{equation}
where
$$
\Phi(s)=J(s)(s-k)(s+k),
$$
is a polynomial of degree up to six.
\end{proof}

\section{A deformation of the Kowalevski top}\label{sec:def1}

\

In this Section we are going to derive the explicit solutions in
genus two theta-functions of the Jurdjevic elasticae \cite{Jur} and
for similar systems \cite {Ko}, \cite{KoK}. First, we show that we
can get the elasticae from the Kowalevski top by using the simplest
gauge transformations of the discriminantly separable polynomials.\\

Consider a discriminantly separable polynomial

$$\mathcal{F}(x_1,x_2,s):=s^2A+sB+C$$
where

\begin{equation}\label{eq:deformationB}
 A=(x_1-x_2)^2,\, B=-2(Ex_1x_2+F(x_1+x_2)+G),\,
C=F^2-EG.
\end{equation}

A simple affine gauge transformation
$
s\mapsto t + \alpha
$
transforms $\mathcal{F}(x_1,x_2,s)$ into

$$
\mathcal{F}_{\alpha}(x_1, x_2, t)=t^2 A_{\alpha} + t B_{\alpha} +C_{\alpha}
,
$$
with
\begin{equation}
 A_{\alpha}=A,\, B_{\alpha}=B+2\alpha A,\, C_{\alpha}=C+\alpha B +\alpha ^2A.
\end{equation}
Next, we denote $F_{\alpha}=F+\alpha F_1,\, E_{\alpha}=E+\alpha
E_1,\, G_{\alpha}=G+\alpha G_1.$ From
$$C_{\alpha}=F_{\alpha}^2-E_{\alpha}G_{\alpha},$$ by equating powers
of $\alpha$, we get
\begin{equation} B=2FF_1-E_1G-EG_1,\quad
A=F_1^2-E_1G_1.
\end{equation}
>From (\ref{eq:deformationB}) one obtains

\begin{equation}
 F_1=-(x_1+x_2),\,
G_1=2x_1x_2,\, E_1=2.
\end{equation}
One easily checks that $F_1^2-E_1G_1=A$,

\begin{equation}
\aligned E_{\alpha}&=6l_1-(x_1+x_2)^2+2\alpha\\
F_{\alpha}&=2cl+x_1x_2(x_1+x_2)-\alpha(x_1+x_2)\\
G_{\alpha}&=c^2-k^2-x_1^2x_2^2+2\alpha x_1x_2.
\endaligned
\end{equation}

\medskip

Jurdjevic considered a deformation of the Kowalevski case associated
to the Kirchhoff elastic problem, see \cite{Jur}. The systems are
defined by the Hamiltonians
$$
H=\frac{1}{4}\left(M_1^2+M_2^2 + 2M_3^2\right) +\gamma_1
$$
where the deformed Poisson structures $\{\cdot, \cdot\}_{\tau}$ are
defined by
$$
\{M_i, M_j\}_{\tau}=\epsilon_{ijk}M_k,\quad \{M_i,
\gamma_j\}_{\tau}=\epsilon_{ijk}\gamma_k,\quad \{\gamma_i,
\gamma_j\}_{\tau}=\tau \epsilon_{ijk}M_k,
$$
and where the deformation parameter takes values $\tau = 0, 1, -1$.
These structures correspond to $e(3)$, $so(4)$, and $so(3,1)$
respectively.  The classical Kowalevski case corresponds to the case
$\tau=0$. The systems with $\tau = -1, 1$ have been considered by
St. Petersburg's school (Komarov, Kuznetsov \cite{KoK}) in 1990's,
and they have been rediscovered by several authors in the meantime.
Here, we are giving explicit formulae in theta-functions for the
solutions of these systems.

Denote
$$
\aligned e_1&=x_1^2-(\gamma_1 + i\gamma_2) +\tau\\
e_2&=x_2^2-(\gamma_1 - i\gamma_2) +\tau,
\endaligned
$$
where
$$
x_{1,2}=\frac{M_1\pm i M_2}{2}.
$$
The integrals of motion
$$
\aligned I_1&=e_1 e_2\\
I_2&=H\\
I_3&=\gamma_1 M_1 + \gamma_2 M_2 + \gamma_3 M_3\\
I_4&=\gamma_1^2 + \gamma_2^2+\gamma_3^2 + \tau (M_1^2+M_2^2+M_3^2)
\endaligned
$$
may be rewritten in the form (\ref{eq:firstint})
$$
\aligned k^2&=I_1=e_1 \cdot e_2\\
M_3^2&=e_1 + e_2  + \hat E(x_1,x_2)\\
-M_3\gamma_3&=-x_2e_1-x_1e_2 + \hat F(x_1, x_2)\\
\gamma_3^2&=x_2^2e_1+x_1^2e_2 + \hat G(x_1,x_2),
\endaligned
$$
where
$$
\aligned \hat G(x_1,x_2)&=-x_1^2x_2^2-2\tau x_1x_2 -2\tau I_2
+\tau^2+I_4-I_1\\
\hat F(x_1,x_2)&=(x_1x_2+\tau)(x_1+x_2)-I_3\\
\hat E(x_1,x_2)&=-(x_1+x_2)^2+2(I_2-\tau).
\endaligned
$$
\begin{proposition}
An affine  gauge transformation $ s\mapsto t +\alpha $ transforms
the Kowalevski top with $c=-1$ to the Jurdjevic elasticae according
to the formulae
$$
\tau =-\alpha,\, I_2=3l_1,\, I_3=2l,\, I_4=1-\alpha^2-6l_1 \alpha.
$$
\end{proposition}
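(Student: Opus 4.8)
The plan is to verify the proposition by comparing, coefficient by coefficient, the two biquadratic triples that build the discriminantly separable polynomial $\mathcal F$ on each side, regarded as polynomials in the symmetric functions $x_1+x_2$ and $x_1x_2$. On the Kowalevski side one already has, from the gauge shift $s\mapsto t+\alpha$ applied to the data (\ref{eq:deformationB}) and specialised to $c=-1$, the deformed triple
\begin{equation}
\nonumber
\aligned
E_{\alpha}&=6l_1-(x_1+x_2)^2+2\alpha,\\
F_{\alpha}&=-2l+x_1x_2(x_1+x_2)-\alpha(x_1+x_2),\\
G_{\alpha}&=1-k^2-x_1^2x_2^2+2\alpha x_1x_2,
\endaligned
\end{equation}
which occupy, respectively, the coefficient of $x_1x_2$, the coefficient of $(x_1+x_2)$, and the constant slot in the polynomial $R$ underlying $\mathcal F$, with $C=F^2-EG$. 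On the Jurdjevic side the first integrals have already been cast in the form (\ref{eq:firstint}) with coefficient functions $\hat E,\hat F,\hat G$. The proposition is then equivalent to the three polynomial identities $E_\alpha=\hat E$, $F_\alpha=\hat F$, $G_\alpha=\hat G$, holding under the claimed parameter dictionary.

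I would read off the dictionary by matching terms. The identity $F_\alpha=\hat F$ is the most rigid: the coefficient of $(x_1+x_2)$ forces $\tau=-\alpha$, and the constant term forces $I_3=2l$. The identity $E_\alpha=\hat E$ gives $2(I_2-\tau)=6l_1+2\alpha$, which upon inserting $\tau=-\alpha$ collapses to $I_2=3l_1$. Finally $G_\alpha=\hat G$ reproduces $\tau=-\alpha$ from the coefficient of $x_1x_2$, while its constant terms give $-2\tau I_2+\tau^2+I_4-I_1=1-k^2$; substituting $\tau=-\alpha$, $I_2=3l_1$, and the Casimir relation $I_1=e_1e_2=k^2$ (so the $-I_1$ and $-k^2$ cancel) leaves $I_4=1-\alpha^2-6l_1\alpha$. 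These are precisely the four asserted formulae, and the matching shows the dictionary is forced, not merely consistent.

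The routine part is this bookkeeping; the real care is needed beforehand, in pinning down the identification of variables so that the two triples may legitimately be equated. One must confirm that the Kowalevski change (\ref{eq:Kowch1}) with $c=-1$ and the Jurdjevic substitution $x_{1,2}=(M_1\pm iM_2)/2$, $e_{1,2}=x_{1,2}^2-(\gamma_1\pm i\gamma_2)+\tau$ put both systems into the common template (\ref{eq:firstint}), with $r$ identified with $M_3$ (the normalisation $I_1=I_2=2I_3$, $I_3=1$ making $M_1=2p$, $M_2=2q$, $M_3=r$, so that $x_{1,2}=p\pm iq$ as in the classical case). The subtle point is that the Kowalevski presentation (\ref{eq:Kowinttrans}) places the $x_1^2x_2^2$-function and the $x_1x_2(x_1+x_2)$-function in the $\gamma_3^2$ and $r\gamma_3$ integrals in the opposite order to the template (\ref{eq:firstint}); one must therefore match by functional form, that is, by the role a function plays inside $R$ and $C=F^2-EG$, rather than by which first integral it happens to inhabit. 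Once this is fixed, the additive constant $+\tau$ in the Jurdjevic $e_i$ is exactly what the shift $s\mapsto t+\alpha$ supplies, and equating the coefficient triples becomes a finite, forced computation. I expect this reconciliation of conventions, not the algebra, to be the one place where an error could hide.
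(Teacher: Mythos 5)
Your proposal is correct and follows exactly the route the paper leaves implicit: the paper states the proposition without proof, but it has already computed the gauged triple $E_\alpha, F_\alpha, G_\alpha$ and the Jurdjevic triple $\hat E, \hat F, \hat G$, and the asserted dictionary is precisely the result of equating them slot by slot (with $I_1=e_1e_2=k^2$ absorbing the $-k^2$ term in $G_\alpha$), as you do. Your remark about matching by the role each function plays in $R$ and $C=F^2-EG$ rather than by which first integral it sits in is well taken: as printed, (\ref{eq:Kowinttrans}) interchanges $F$ and $G$ between the $rc\gamma_3$ and $c^2\gamma_3^2$ relations (one checks directly from (\ref{eq:Kowsystint}) that $rc\gamma_3=F-x_2e_1-x_1e_2$), so your resolution is the correct reading.
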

\medskip
We can apply the generalized K\"{o}tter transformation derived in
\cite{Drag3} to obtain the expressions for $M_i,\gamma_i$ in terms
of $P_i$ and $P_{ij}$ functions for $i,j=1,2,3$. First we will
rewrite the equations of motion for Jurdjevic elasticae:
\begin{equation}\label{eq:jurdjevicsystem}
\aligned \dot M_1 &= \frac{M_2M_3}{2}\\
\dot M_2 &= -\frac{M_1M_3}{2} +\gamma_3\\
\dot M_3 &= -\gamma_2\\
\dot \gamma_1 &= -\frac{M_2\gamma_3}{2}+M_3\gamma_2\\
\dot \gamma_2 &= \frac{M_1 \gamma_2}{2} - M_3 \gamma_1 +\tau M_3\\
\dot \gamma_3 &= -\frac{M_1 \gamma_2}{2} + \frac{M_2\gamma_1}{2} -\tau M_2.
\endaligned
\end{equation}
\medskip
Now we introduce the following notation:
$$
\aligned
R(x_1,x_2) &=\hat E x_1 x_2+ \hat F (x_1+x_2) +\hat G,\\
R_1(x_1,x_2) &=\hat E \hat G - \hat F^2,\\
P(x_i) &= \hat E x_i^2 +2 \hat F x_i +\hat G, \quad i=1,\,2.
\endaligned
$$

\begin{lemma}For polynomial $\mathcal{F}(x_1,x_2,s)$ given with
$$ \mathcal{F}(x_1,x_2,s) = (x_1-x_2)^2s^2 -2R(x_1,x_2)s-R_1(x_1,x_2),$$
there exist polynomials $A(x_1,x_2,s)$, $B(x_1,x_2,s)$, $f(s)$,
$A_0(s)$ such that the following identity holds
\begin{equation}\label{eq:kotterjurdj}
\mathcal{F}(x_1,x_2,s)A_0(s)=A^2(x_1,x_2,s)+f(s)B(x_1,x_2,s).
\end{equation}
 The polynomials are defined by the formulae:
\begin{equation}
\nonumber
\aligned
A_0(s) &= 2s+2I_1-2\tau\\
f(s) &= 2s^3+2(I_1-3\tau)s^2+(-4\tau(I_1-\tau)-2I_2+4\tau^2+2I_4-4\tau I_2)s\\
& +(I_1-\tau)(-2I_1+2\tau^2+2I_4-4\tau I_2)-I_3^2+2(I_1-\tau)\tau^2\\
A(x_1,x_2,s) &= A_0(s)(x_1x_2-s)-I_3(x_1+x_2)+2\tau
(I_1-\tau)+2\tau s\\
B(x_1,x_2,s) &= (x_1+x_2)^2-2s-2I_1+2\tau.
\endaligned
\end{equation}

\end{lemma}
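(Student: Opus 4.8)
The plan is to prove the identity (\ref{eq:kotterjurdj}) by direct verification, after first exploiting the fact that every polynomial entering it is symmetric in $x_1$ and $x_2$. I would introduce the elementary symmetric functions $\sigma_1 = x_1 + x_2$ and $\sigma_2 = x_1 x_2$ and rewrite $\hat E, \hat F, \hat G$, hence $R, R_1, P$, as well as the candidate polynomials $A_0, f, A, B$, as polynomials in $\sigma_1, \sigma_2, s$ with coefficients depending only on $\tau, I_1, I_2, I_3, I_4$. Two structural simplifications guide the computation. First, $A_0(s) = 2(s + I_1 - \tau)$ and $B = \sigma_1^2 - A_0(s)$, so $B$ is not independent data. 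Second, $A$ is affine in each of $x_1, x_2$ (linear in $\sigma_2$ and in $\sigma_1$), so $A^2$ is biquadratic in $(x_1,x_2)$, matching $A_0\mathcal{F}$, which is biquadratic as well once the cancellation making $R_1 = \hat E\hat G - \hat F^2$ of degree two in each variable is taken into account.

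The first reduction I would record is the cancellation of the top-order terms in $s$: since $A$ has leading term $-2s^2$ while $f$ and $B$ have leading terms $2s^3$ and $-2s$, the $s^4$ contributions of $A^2$ and $f B$ cancel, so both sides of (\ref{eq:kotterjurdj}) are cubic in $s$, consistent with $A_0\mathcal{F}$. The proof then reduces to matching the coefficients of $s^3, s^2, s^1, s^0$, each an identity between symmetric polynomials in $\sigma_1, \sigma_2$. The coefficients of $s^3$ and $s^2$ involve only $\sigma_1^2, \sigma_2$ and the leading data of $f$, and match after a short computation (for instance, the $s^3$ coefficient on both sides equals $2\sigma_1^2 - 8\sigma_2 = 2(x_1-x_2)^2$). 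I would dispose of these first, since they already pin down the leading coefficients $2s^3$ and $2(I_1-3\tau)s^2$ of $f$.

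Conceptually, the identity expresses the fact that at each root $s_0$ of $f$ the biquadratic $A_0(s_0)\mathcal{F}(x_1,x_2,s_0)$ becomes the perfect square $A(x_1,x_2,s_0)^2$; equivalently, $f$ is forced, up to its normalization, as the unique cubic rendering $A_0\mathcal{F} - A^2$ divisible by $B$. Because of this, $f$ could alternatively be obtained by transporting the generalized K\"otter transformation of \cite{Drag3} for the degree-four Kowalevski polynomial through the affine gauge substitution $s \mapsto t + \alpha$ of the preceding Proposition, with $\tau = -\alpha$, $I_2 = 3l_1$, $I_3 = 2l$, $I_4 = 1 - \alpha^2 - 6l_1\alpha$ and $c = -1$; reading off the transformed identity reproduces the stated $A_0, A, B, f$, and this route explains the origin of the extra factor $A_0(s)$. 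Since it still requires the explicit degree-four identity, for a self-contained argument I would favour direct substitution.

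The step I expect to be the main obstacle is matching the constant ($s^0$) coefficient. There the left-hand side contributes $-2(I_1-\tau)R_1$, with $R_1 = \hat E\hat G - \hat F^2$ a symmetric biquadratic form in $x_1, x_2$, while the right-hand side contributes $a_0^2 + f_0\,(\sigma_1^2 - 2(I_1-\tau))$, where $a_0 = 2(I_1-\tau)(\sigma_2+\tau) - I_3\sigma_1$ is the constant term of $A$ and $f_0$ is the constant term of $f$. Equating the two and collecting the $\sigma_1,\sigma_2$-free part is precisely what forces the elaborate value $f_0 = (I_1-\tau)(-2I_1 + 2\tau^2 + 2I_4 - 4\tau I_2) - I_3^2 + 2(I_1-\tau)\tau^2$; this bookkeeping, involving the full dependence on $I_1,\dots,I_4,\tau$, is the only genuinely delicate part, the remaining coefficient comparisons being routine.
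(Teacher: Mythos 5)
Your proposal is correct and matches the paper's (implicit) argument: the paper offers no proof beyond the "direct calculation" it invokes for the analogous K\"otter decomposition of Corollary \ref{cor:kotterP_i}, and your coefficient-matching in powers of $s$ with symmetric-function bookkeeping is exactly that computation, with the leading cancellation ($4s^4$ from $A^2$ against $-4s^4$ from $fB$) and the $s^3$ check $2\sigma_1^2-8\sigma_2=2(x_1-x_2)^2$ verified correctly. Your side remark that the identity can alternatively be transported from the degree-four Kowalevski case through the gauge $s\mapsto t+\alpha$ is a nice observation consistent with the preceding Proposition, but is not needed and is not the route the paper takes.
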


\medskip
Denote by $m_i$ the zeros of polynomial $f$ and
$$P_i=\sqrt{(s_1-m_i)(s_2-m_i)}\,\,\,i=1,\,2,\,3.$$
 The same way as in Corollary \ref{cor:kotterP_i} we get
\begin{equation}\label{eq:jurdjP_i}
P_i=\frac{\sqrt{A_0(m_i)}(x_1x_2-m_i)}{x_1-x_2}+\frac{-I_3(x_1+x_2)+2\tau
(I_1-\tau+m_i)}{(x_1-x_2)\sqrt{A_0(m_i)}}, \quad i=1,\,2,\,3.
\end{equation}
Put
$$
\aligned
X &= \frac{x_1 x_2}{x_1-x_2}, \quad Y = \frac{1}{x_1-x_2},\\
Z &= \frac{-I_3(x_1+x_2)+2\tau (I_1-\tau)}{x_1-x_2},\\
n_i &= A_0(m_i) = 2m_i+2I_1-2\tau, \quad i=1,\,2,\,3.
\endaligned
$$
The relations (\ref{eq:jurdjP_i}) can be rewritten  as a system of
linear equations
\begin{equation}
\nonumber
\aligned X+Y m_1 \left(\frac{2\tau}{n_1}-1\right)+\frac{Z}{n_1} & =
\frac{P_1}{\sqrt{n_1}}\\
X+Y m_2 \left(\frac{2\tau}{n_2}-1\right)+\frac{Z}{n_2} & =
\frac{P_2}{\sqrt{n_2}}\\
X+Y m_3 \left(\frac{2\tau}{n_3}-1\right)+\frac{Z}{n_3} & =
\frac{P_3}{\sqrt{n_3}}.
\endaligned
\end{equation}
The solutions of the previous system are
\begin{equation}\label{eq:jsol-X,Y,Z}
\aligned Y &= -\sum_{i=1}^{3} \frac{\sqrt{n_i}P_i}{f'(m_i)}\\
X &= -\sum_{i=1}^{3}
\frac{P_i\sqrt{n_i}}{f'(m_i)}\left(m_j+m_k+I_1-2\tau\right)\\
Z &= \sum_{i=1}^{3} \frac{2\sqrt{n_i}P_i}{f'(m_i)}\left(\frac{n_j \cdot
n_k}{4}+\tau (\tau-I_1)\right),
\endaligned
\end{equation}
with $(i,j,k)$ -  a cyclic permutation of $(1,2,3)$.\\
\medskip

Finally, we obtain
\begin{proposition} The solutions of the system of differential equations
(\ref{eq:jurdjevicsystem}) in terms of $P_i,\,P_{ij}$ functions are
 given with
$$
\aligned
M_1 &= \frac{\sum_{i=1}^{3}
\frac{2\sqrt{n_i}P_i}{f'(m_i)}(\frac{n_j \cdot
n_k}{4}+\tau (\tau-I_1))}{I_3\sum_{i=1}^{3} \frac{\sqrt{n_i}P_i}{f'(m_i)}}+\frac{2\tau(I_1-\tau)}{I_3}\\
M_2 &=-\frac{1}{i \sum_{i=1}^{3} \frac{\sqrt{n_i}P_i}{f'(m_i)}}\\
M_3 &= \frac{2i\sum_{k=1}^{3} \frac{n_k \sqrt{n_i
n_j}P_{ij}}{f'(m_k)}}{\sum_{i=1}^{3}
\frac{\sqrt{n_i}P_i}{f'(m_i)}}
\endaligned
$$
and
$$
\aligned
\gamma_1 &= I_2+\frac{1}{8}\left( \frac{\sum_{k=1}^{3} \frac{n_k
\sqrt{n_i n_j}P_{ij}}{f'(m_k)} }{\sum_{i=1}^{3}
\frac{\sqrt{n_i}P_i}{f'(m_i)}}
\right)^2-\frac{\sum_{i=1}^{3}\frac{P_i
\sqrt{n_i}}{f'(m_i)}(m_j+m_k +I_1 -2\tau)}{\sum_{i=1}^{3}\frac{P_i
\sqrt{n_i}}{f'(m_i)}}\\
\gamma_2 &= -2i\frac{(\sum_{k=1}^{3} \frac{n_k
\sqrt{n_in_j}}{f'(m_k)}\frac{P_iP_j}{2})\cdot
(\sum_{i=1}^{3}\frac{\sqrt{n_i}P_i}{f'(m_i)})}{\left(\sum_{i=1}^{3} \frac{\sqrt{n_i}P_i}{f'(m_i)}\right)^2}\\
&+2i\frac{ (\sum_{k=1}^{3}
\frac{n_k \sqrt{n_in_j}P_{ij}}{f'(m_k)})\cdot(\sum_{i=1}^{3}
\frac{\sqrt{n_i}}{f'(m_i)}
\frac{P_k P_{ik}-P_jP_{ij}}{2(m_j-m_k)})}{\left(\sum_{i=1}^{3} \frac{\sqrt{n_i}P_i}{f'(m_i)}\right)^2} \\
\gamma_3 &= \frac{\sum_{k=1}^{3} \frac{\sqrt{n_i
n_j}P_{ij}}{f'(m_k)}}{2i \sum_{i=1}^{3}
\frac{\sqrt{n_i}P_i}{f'(m_i)}}.
\endaligned
$$
\end{proposition}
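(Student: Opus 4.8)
The plan is to recover the six phase-space coordinates in three stages, according to the order in which the auxiliary functions $P_i$ and $P_{ij}$ enter: first the configuration variables $M_1,M_2$ purely algebraically, then the first-order variables $M_3,\gamma_3$ after one differentiation, and finally $\gamma_1,\gamma_2$, which are quadratic in the velocities. For the first stage I would read $M_1,M_2$ off the linear-system solution (\ref{eq:jsol-X,Y,Z}). Since $x_{1,2}=(M_1\pm iM_2)/2$, one has $x_1-x_2=iM_2=1/Y$, $\;x_1x_2=X/Y$, and, by the definition of $Z$, $\;M_1=x_1+x_2=\big(2\tau(I_1-\tau)-Z/Y\big)/I_3$. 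Substituting the expressions for $X,Y,Z$ from (\ref{eq:jsol-X,Y,Z}) produces the stated formulae for $M_1$ and $M_2$ at once; in particular $M_2=-\big(i\sum_i \sqrt{n_i}P_i/f'(m_i)\big)^{-1}$ because $Y=-\sum_i\sqrt{n_i}P_i/f'(m_i)$.

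Next, for $M_3$ and $\gamma_3$ I would rewrite the equations of motion (\ref{eq:jurdjevicsystem}) in the variables $x_1,x_2$. A direct computation gives $\dot x_1=-\frac{i}{2}f_1$, $\dot x_2=\frac{i}{2}f_2$ with $f_i=M_3x_i-\gamma_3$, exactly the form required by Theorem \ref{th:sufficient} (with $r=M_3$, $c=-1$). Solving the pair $f_1=M_3x_1-\gamma_3$, $f_2=M_3x_2-\gamma_3$ yields
\[
M_3=\frac{f_1-f_2}{x_1-x_2}=2i\,\frac{\dot x_1+\dot x_2}{x_1-x_2},\qquad
\gamma_3=\frac{x_2f_1-x_1f_2}{x_1-x_2}=2i\,\frac{x_2\dot x_1+x_1\dot x_2}{x_1-x_2}.
\]
To turn these into functions of $P_i,P_{ij}$ I would differentiate in $t$ the Stage-1 expressions for $x_1,x_2$, using the differential relations for the functions $P_{ij}$ (defined as in (\ref{eq:pij}) with the roots $m_i$ of $f$ in place of $l_i$), which satisfy the analogue of (\ref{eq:ex1-rel P_i}), in particular $\dot P_{ij}=\frac12 P_iP_j$. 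These express each $\dot x_i$ linearly in the $P_{ij}$, so that $M_3$ and $\gamma_3$ emerge as ratios whose numerators are the displayed combinations of $P_{ij}$ and whose denominator is (a multiple of) $\sum_i\sqrt{n_i}P_i/f'(m_i)$.

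Finally, $\gamma_1$ and $\gamma_2$ are second-order in the velocities. For $\gamma_2$ I would use the equation of motion $\dot M_3=-\gamma_2$ directly: differentiating the ratio just obtained for $M_3$ and applying $\dot P_{ij}=\frac12P_iP_j$ together with the $\dot P_i$ relations produces precisely the two-term quotient-rule structure of the claimed $\gamma_2$, the first term coming from differentiating the numerator and the second from differentiating the denominator. For $\gamma_1$ I would avoid a second differentiation and instead use the first integral $M_3^2=e_1+e_2+\hat E$ together with $e_1+e_2=x_1^2+x_2^2-2\gamma_1+2\tau$ and $\hat E=-(x_1+x_2)^2+2(I_2-\tau)$; these collapse to $\gamma_1=I_2-x_1x_2-\frac12 M_3^2$, and inserting $x_1x_2=X/Y$ from Stage 1 and $M_3$ from Stage 2 gives the stated expression.

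The main obstacle is the bookkeeping in the last two stages: one must verify that the time-derivatives of the rational $P_i$-expressions for $x_1,x_2$ (and for $M_3$) close up into the $P_{ij}$ functions with exactly the weights $n_i=A_0(m_i)$ and the denominators $f'(m_i)$ appearing in the statement. This rests on the differential relations among $P_i,P_{ij}$ for the roots of $f$, which in turn follow from the K\"otter factorization (\ref{eq:kotterjurdj}) and the discriminant-separability identities underlying the proof of Theorem \ref{th:sufficient}. Pinning down the precise constants and sign conventions is the only delicate point; the remainder is a routine, if lengthy, verification.
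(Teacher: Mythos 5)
Your proposal is correct and follows essentially the route the paper intends: the proposition is stated without a written proof, immediately after the K\"otter-type factorization, the formula (\ref{eq:jurdjP_i}) for the $P_i$, and the solution (\ref{eq:jsol-X,Y,Z}) of the linear system, so recovering $M_1,M_2$ from $X,Y,Z$, then $M_3,\gamma_3$ from $\dot x_1=-\tfrac{i}{2}(M_3x_1-\gamma_3)$, $\dot x_2=\tfrac{i}{2}(M_3x_2-\gamma_3)$, and finally $\gamma_2=-\dot M_3$ together with $\gamma_1=I_2-x_1x_2-\tfrac12 M_3^2$ from the first integral, is exactly the intended argument, and your quotient-rule computation of $\gamma_2$ reproduces the displayed two-term structure verbatim. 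The only caveat your Stage 3 would surface is a constant-factor inconsistency internal to the printed proposition: the $\tfrac18\left(N/D\right)^2$ term in $\gamma_1$ is consistent with $M_3=N/(2iD)$ (the normalization used for $\gamma_3$) rather than with the printed $M_3=2iN/D$, which looks like a typo in the paper rather than a defect of your method.
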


\medskip

As we mentioned before, the formulae expressing $P_i, P_{ij}$ in
terms of the theta-functions are given \cite{Kow}. This gives
explicit formulae for the elasticae.

\

\thispagestyle{empty} \vspace*{20mm}

\end{document}